\numberwithin{equation}{section}
{\theoremstyle{plain}
\newtheorem{theorem}{Theorem}[section]
\newtheorem{lemma}[theorem]{Lemma}

\newtheorem{proposition}[theorem]{Proposition}
\newtheorem{example}[theorem]{Example}
}
{\theoremstyle{definition}
\newtheorem{definition}[theorem]{Definition}
}
{\theoremstyle{remark}
\newtheorem{remark}[theorem]{Remark}
}
\newcommand{\deftobe}{\coloneq} 
\newcommand{\Prob}{\mathbb{P}}
\newcommand{\Inv}{\operatorname{Inv}}
\newcommand{\hookprod}{\pi}
\newcommand{\e}{\mathbb{E}}
\newcommand{\beq}{\preceq} 
\newcommand{\sbo}{\preceq_B}
\newcommand{\wbo}{\preceq_W} 
\newcommand{\strongintervals}[1]{\mathcal{B}_{#1}} 
\newcommand{\weakintervals}[1]{\mathcal{W}_{#1}} 
\newcommand{\sym}[1]{\mathfrak{S}_{#1}} 
\newcommand{\GL}{\mathrm{GL}} 
\newcommand{\gale}{\leq_G}
\newcommand{\RSK}{\mathrm{RSK}} 
\newcommand{\SYT}{\mathrm{SYT}} 
\newcommand{\planch}{\Prob_{\mathrm{Pl}}} 
\newcommand{\conj}[1]{#1'} 
\newcommand{\hook}[3]{h_{#1}(#2,#3)} 
\newcommand{\hookproduct}[1]{\pi(#1)} 
\newcommand{\permposet}[1]{P(#1)}
\definecolor{forgreen}{RGB}{30, 150, 30}
\title{Comparability in Bruhat orders}
\author[Boretsky]{Jonathan Boretsky}
\address{Department of Mathematics and Statistics, McGill University, Montreal, QC}
\email{\href{mailto:jonathan.boretsky@mcgill.ca}{jonathan.boretsky@mcgill.ca}}
\author[Cornejo]{Alvaro Cornejo}
\address{Department of Mathematics, University of Kentucky, Lexington, KY 40508}
\email{\href{mailto:alvaro.cornejo@uky.edu}{alvaro.cornejo@uky.edu}}
\author[Hodges]{Reuven Hodges}
\address{Department of Mathematics, University of Kansas, Lawrence, KS 66045}
\email{\href{mailto:rmhodges@ku.edu}{rmhodges@ku.edu}}
\author[Horn]{Paul Horn}
\address{Department of Mathematics, University of Denver, Denver, CO 80210 and Department of Mathematics and Applied Mathematics, University of Johannesburg, Johannesburg, South Africa}
\email{\href{mailto:paul.horn@du.edu}{paul.horn@du.edu}}
\author[Lesnevich]{Nathan Lesnevich}
\address{Department of Mathematics, Oklahoma State University, Stillwater, OK 74078}
\email{\href{mailto:nlesnev@okstate.edu}{nlesnev@okstate.edu}}
\author[McAllister]{Tyrrell McAllister}
\address{Department of Mathematics and Statistics, University of Wyoming, Laramie, WY, 82071}
\email{\href{mailto:tmcallis@uwyo.edu}{tmcallis@uwyo.edu}}
\begin{document}

\begin{abstract}
    We determine the sharp asymptotic scale of the probability that two uniformly random permutations are comparable in weak Bruhat order, showing that 
    $\mathbb{P}(\sigma_1 \preceq_W \sigma_2)=\exp\Bigl(\bigl(-\tfrac12+o(1)\bigr)\,n\log n\Bigr)$. This significantly improves both of the best known bounds, due to Hammett and Pittel, which placed this probability between $\exp((-1+o(1))n\log n)$ and $\exp(-\Theta(n))$. We also improve the best known lower bound for strong Bruhat-order comparability, due to the same authors, by proving a subexponential lower bound. The Bruhat orders are natural partial orders on the symmetric group, appearing in wide-reaching settings including the geometry of flag manifolds, the representation theory of $\mathfrak{S}_{n}$, and the combinatorics of the permutohedron. To analyze weak Bruhat order, we combine classic analytic, tableau-theoretic, and poset-theoretic tools, including the Plancherel measure and the RSK bijection. For strong Bruhat order we construct large families where members are comparable with high probability.  Our proof that members are comparable combines the tableau criterion with an associated random-walk-type deviation process. 
\end{abstract}

\maketitle

\section{Introduction} 

When a family of objects carries a natural way to permute its underlying labels, that is, an action
of the symmetric group $\sym{n}$, the action leaves a shadow of $\sym{n}$: the group’s internal structure
imposes systematic constraints on the objects’ underlying combinatorics. This shadow appears in many
forms. In algebra it governs the invariant theory and representation theory of $\sym{n}$ naturally attached to $\sym{n}$-actions;
in geometry and topology it enters through configuration spaces, flag varieties, and Schubert Calculus,
where permutations label Schubert strata and their closure relations; and in combinatorics,
probability, and algorithms it sheds light on statistics of random permutations and sorting by adjacent
transpositions \cite{BB05,Humphreys90}. Across these settings, understanding the $\sym{n}$-action naturally leads one to compare permutations in a manner compatible with the standard generators of $\sym{n}$, namely the adjacent transpositions. 
The strong and weak Bruhat orders furnish two canonical comparability relations on $\sym{n}$. These intrinsic partial orders are tailored to different ways of assembling permutations from these generators, encoding geometric closure phenomena and one-sided monotone dynamics, respectively.

The strong Bruhat order, denoted here by $\sbo$ and defined in Section~\ref{sec:background}, traces its origins to the decades-long program—catalyzed by Hilbert’s fifteenth
problem~\cite{H02}—to place Schubert’s enumerative calculus on a rigorous, axiomatic footing. Permutations index Schubert cells in the type-A flag manifold $\GL_n/B$, and already in Ehresmann’s 1934 work one sees that
the closure relations among these cells induce a partial order on $\sym{n}$ \cite{Ehresmann34}.
Chevalley’s formulation of the Bruhat decomposition 
\[
G=\bigsqcup_{w\in \mathfrak{W}}BwB
\]
for any semisimple group $G$ with Weyl group $\mathfrak{W}$ makes this order intrinsic: $u\sbo v$ records containment of Schubert varieties
\cite{C94}. From Verma’s representation-theoretic study \cite{Verma71} onward, Bruhat order became a
basic tool, admitting combinatorial characterizations through reduced words and reflections; in particular,
the subword criterion relates comparability to containment of reduced expressions \cite{BB05}.
Later work in algebraic combinatorics clarified the topology of intervals (\emph{e.g.}, EL-shellability) and
strengthened the links to Hecke algebras and Kazhdan--Lusztig theory \cite{BjornerWachs82,KazhdanLusztig79}. Many of these algebraic and geometric connections extend, in surprising generality, to all Weyl, reflection, and even Coxeter groups.

The (right) weak Bruhat order, denoted here by $\wbo$ and defined in Section~\ref{sec:background}, is a coarsening of the strong Bruhat order; in particular, $\sigma \wbo \omega$ implies $\sigma \sbo \omega$. The weak Bruhat order admits left and right variants, which induce isomorphic posets on $\sym{n}$; throughout we work with the right weak order. The relationship between the strong and weak orders is transparent in reduced-word language, where right weak order
is governed by initial subwords of reduced decompositions, whereas strong Bruhat
order is governed by arbitrary subwords 
\cite{BB05}. A complementary characterization, used throughout this paper, is that
$\sigma \wbo \omega$ is equivalent to containment of inversion sets of the inverses,
$\Inv(\sigma^{-1})\subseteq \Inv(\omega^{-1})$.  
This inversion viewpoint connects weak order to adjacent-swap sorting
procedures (such as bubble sort~\cite{KnuthTAOCP3}) and to earlier lattice-theoretic appearances of inversion containment~\cite{GR63}.  Bj\"orner’s 1984 study of orderings of Coxeter groups gave the first systematic
treatment of weak order, establishing basic structural properties \cite{Bjorner84Orderings}.  
A geometric model is provided by the
permutohedron: its one-skeleton encodes adjacent transpositions, and orienting edges by increasing
length yields the Hasse diagram of $(\sym{n},\wbo)$ \cite{Ziegler95}; see \cite{GR63} for a prototypical treatment. 

From a poset-theoretic viewpoint, the strong and weak Bruhat orders share the same natural grading by Coxeter length, yet they differ substantially in their global combinatorial behavior.  Weak order is a graded lattice, so meets and joins exist and one can exploit lattice-theoretic structure such as congruences and canonical quotients \cite{BB05,Reading04}.  Strong Bruhat order is not a lattice even in type $A$, but it has a remarkably rigid interval theory: Bruhat intervals are Eulerian and admit EL-shellings, yielding strong topological and enumerative consequences for their order complexes \cite{BjornerWachs82,BB05}.  At the same time, both orders satisfy strikingly strong antichain bounds of Sperner type.  For strong Bruhat order these follow from the geometry of the flag variety via Hard Lefschetz \cite{Stanley80}, whereas for weak order they rest on more delicate combinatorial constructions~\cite{GaetzGao20}.

\subsection{Comparability}\label{subsec:comparability}

A natural global question about a finite poset $(P,\preceq)$ is: if $X,Y$ are independent uniform
random elements of $P$, what is the probability that they are comparable? This \emph{comparability probability}, denoted $\Prob(X \preceq Y)$, is one of the most basic global statistics of a poset, alongside quantities such as
rank sizes, cover relations, Möbius invariants, and the topology of the order complex. In many
classical posets (for example, the Tamari lattice), the probability of comparability is remarkably tractable
and admits explicit formulas and refined enumerations \cite{Kreweras72,SCV_stanley86,Chapoton_tamari05,MFPR_mTamari11,Pittel97,Pittel99}. For the strong and weak Bruhat orders, by contrast, no exact formulas are known, and the literature does not even determine the leading order
asymptotics. Motivated by this gap, we study comparability in the strong and weak Bruhat orders on $\sym{n}$.

The probability of comparability can be framed in terms of the equivalent problem of interval enumeration. Let
\[
\weakintervals{n} \ :=\ \{ (\pi, \sigma)\in \sym{n}^2 : \pi \wbo \sigma\}
\qquad\text{and}\qquad
\strongintervals{n} \ :=\ \{ (\pi, \sigma)\in \sym{n}^2 : \pi \sbo \sigma\}
\]
denote the sets of intervals in the (right) weak and strong orders, respectively.  Thus for
independent uniform $\sigma_1,\sigma_2\in\sym{n}$,
\[
\Prob(\sigma_1\wbo\sigma_2)=\frac{|\weakintervals{n}|}{(n!)^2},
\qquad
\Prob(\sigma_1\sbo\sigma_2)=\frac{|\strongintervals{n}|}{(n!)^2}.
\]

We first determine the leading asymptotic scale of $\Prob(\sigma_1\wbo\sigma_2)$ with a sharp
leading constant.

\begin{theorem}\label{thm:main_weak}
As $n\to\infty$,
\[
\Prob(\sigma_1\wbo\sigma_2)
\ =\ \exp\Bigl(\bigl(-\tfrac12+o(1)\bigr)\,n\log n\Bigr).
\]
Equivalently,
\[
|\weakintervals{n}|
\ =\
(n!)^2\,\exp\Bigl(\bigl(-\tfrac12+o(1)\bigr)\,n\log n\Bigr).
\]

\end{theorem}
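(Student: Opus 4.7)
The starting point is the identity $|\weakintervals{n}| = \sum_{\sigma \in \sym{n}} L(\sigma)$ where $L(\sigma) := |\{\omega : \omega \wbo \sigma\}|$. Using the inversion-containment criterion $\omega \wbo \sigma \iff \Inv(\omega^{-1}) \subseteq \Inv(\sigma^{-1})$, one sees $L(\sigma)$ equals the number $e(P_\sigma)$ of linear extensions of the two-dimensional \emph{permutation poset} $P_\sigma$ on $[n]$, defined by $i <_{P_\sigma} j$ iff $i <_{\mathbb{Z}} j$ and value $i$ precedes value $j$ in $\sigma$. The chains of $P_\sigma$ are exactly the increasing subsequences of $\sigma$, while chains of its complementary poset $P_\sigma^c$ (obtained by reversing one of the two defining linear orders) are the decreasing subsequences.

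For the upper bound I would combine Greene's theorem with Viennot's geometric construction of RSK: for any $\sigma$ with RSK shape $\lambda$, these produce a decomposition of $P_\sigma$ into $\lambda'_1$ chains of sizes $\lambda_1, \lambda_2, \ldots, \lambda_{\lambda'_1}$. Since every linear extension is in particular an interleaving of these chains,
\[
L(\sigma) \;\leq\; \binom{n}{\lambda_1,\lambda_2,\ldots,\lambda_{\lambda'_1}} \;=\; \frac{n!}{\prod_i \lambda_i!}.
\]
Summing and grouping by RSK shape (using that $f_\lambda^2$ permutations have shape $\lambda$),
\[
|\weakintervals{n}| \;\leq\; n!\sum_{\lambda \vdash n}\frac{f_\lambda^2}{\prod_i \lambda_i!} \;=\; (n!)^2\,\e_{\lambda\sim\planch}\!\left[\frac{1}{\prod_i \lambda_i!}\right].
\]
By the Vershik--Kerov--Logan--Shepp limit shape, $\sum_i\log\lambda_i! = \tfrac12 n\log n + O(n)$ with high Plancherel probability; combined with standard large-deviation estimates ruling out atypical (column-heavy) shapes with substantially smaller $\prod_i\lambda_i!$, this yields the expectation is $\exp(-(\tfrac12 + o(1))\,n\log n)$, proving the desired upper bound.

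For the lower bound my central claim, strongly supported by small-case experiments, is the pointwise inequality
\[
L(\sigma)\cdot U(\sigma)\;\geq\; n!,\qquad U(\sigma) := |\{\omega : \sigma\wbo\omega\}|.
\]
Granting this, since $\sum_\sigma L(\sigma) = \sum_\sigma U(\sigma) = |\weakintervals{n}|$, the AM--GM inequality gives
\[
2\,|\weakintervals{n}| \;=\; \sum_\sigma(L(\sigma)+U(\sigma)) \;\geq\; 2\sum_\sigma\sqrt{L(\sigma)\,U(\sigma)} \;\geq\; 2\,n!\,\sqrt{n!},
\]
so $|\weakintervals{n}| \geq (n!)^{3/2}$ and hence $\Prob(\sigma_1\wbo\sigma_2) \geq (n!)^{-1/2} = \exp(-(\tfrac12+o(1))\,n\log n)$.

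The hard part is establishing the pointwise inequality $L(\sigma)U(\sigma)\geq n!$, which, via $U(\sigma) = e(P_\sigma^c)$, becomes the statement $e(P_\sigma)\cdot e(P_\sigma^c)\geq n!$ about complementary 2-dimensional posets. A natural approach exploits the lattice structure of weak order: for each $\tau \in \sym{n}$, the meet--join pair $(\sigma\wedge\tau,\;\sigma\vee\tau)$, with $\sigma\wedge\tau$ having inversion set $\Inv(\sigma^{-1})\cap\Inv(\tau^{-1})$ and $\sigma\vee\tau$ the transitive closure of the union, lies in $\{\omega : \omega\wbo\sigma\}\times\{\omega : \sigma\wbo\omega\}$. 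This map is not injective in general, so the technical heart of the lower bound would be a careful multiplicity count using the explicit inversion-set descriptions of meets and joins. Asymptotic consistency is encouraging: the chain-decomposition bounds give $L\cdot U \leq n!^2/(\prod\lambda_i!\cdot\prod\lambda_i'!)$, which on Plancherel-typical shapes is already $\approx n!$, suggesting the conjectural lower bound is tight for typical $\sigma$.
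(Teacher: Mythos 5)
Your upper bound follows the same route as the paper up to the reduction to a Plancherel average: you obtain $L(\sigma)\le n!/\prod_i\lambda_i!$ (the paper reaches this via Greene's theorem plus the Bochkov--Petrov upper inequality, you via Greene plus a direct chain-interleaving count -- these are the same bound), and you then write $|\weakintervals{n}|\le (n!)^2\,\e_{\planch}[1/\prod_i\lambda_i!]$. The gap is the final step: invoking the Vershik--Kerov--Logan--Shepp limit shape together with ``standard large-deviation estimates ruling out atypical (column-heavy) shapes'' is not a proof. The limit shape controls \emph{typical} $\lambda$, but the expectation $\e_{\planch}[1/\prod_i\lambda_i!]$ could a priori be dominated by rare column-heavy shapes on which $\prod_i\lambda_i!$ is tiny, and quantifying the tradeoff between small $\prod\lambda_i!$ and small Plancherel mass is exactly the substance of the paper's Theorem~\ref{thm:Psi}: a \emph{uniform} bound $\Psi(\lambda)=2\log\hookprod(\lambda)+\sum_i\log\lambda_i!\ge\tfrac32 n\log n-5n$ over all $\lambda\vdash n$, proved by the row-peeling induction in Lemma~\ref{lem:strip} with the log-sum inequality base case in Lemma~\ref{lem:skinny}. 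Without that (or an equivalent uniform statement, plus the Hardy--Ramanujan bound on $p(n)$ so the sum over shapes doesn't cost anything), your upper bound is incomplete. This is the hardest step in the paper, and you have asserted rather than supplied it.

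Your lower bound is a genuinely different -- and, modulo one citation, shorter -- argument than the paper's. The pointwise inequality you conjecture, $L(\sigma)\,U(\sigma)\ge n!$, i.e.\ $e(P)\cdot e(\bar P)\ge n!$ for a $2$-dimensional permutation poset $P$ and its complementary poset $\bar P$, is in fact a known theorem of Sidorenko (A.~Sidorenko, \emph{Inequalities for the number of linear extensions}, Order \textbf{8} (1991), 331--340): if $P_1,P_2$ are posets on $[n]$ such that every pair of distinct elements is comparable in at least one of them, then $e(P_1)e(P_2)\ge n!$. Your identification $U(\sigma)=L(\sigma w_0)=e(P(w_0\sigma^{-1}))$, and the observation that $P(\sigma^{-1})$ and $P(w_0\sigma^{-1})$ are complementary $2$D posets, is correct. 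With Sidorenko in hand your AM--GM step
\[
2\,|\weakintervals{n}|=\sum_\sigma\bigl(L(\sigma)+U(\sigma)\bigr)\ge 2\sum_\sigma\sqrt{L(\sigma)U(\sigma)}\ge 2\,n!\,\sqrt{n!}
\]
cleanly yields $\Prob(\sigma_1\wbo\sigma_2)\ge (n!)^{-1/2}=\exp\bigl(-\tfrac12 n\log n+O(n)\bigr)$. That is a slicker route than the paper's, which restricts to permutations with $\mathrm{LIS}\le 3\sqrt n$ (via Baik--Deift--Johansson), partitions the permutation poset into $O(\sqrt n)$ antichains via Mirsky, and applies the Bochkov--Petrov lower bound. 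However, as written your proposal treats the inequality as an open conjecture ``strongly supported by small-case experiments,'' and your sketched meet--join argument does not close the gap, as you yourself note the map is not injective. (Note also that one cannot recover Sidorenko's inequality merely from Bochkov--Petrov applied to both $P$ and $\bar P$: that gives $e(P)e(\bar P)\ge\prod\lambda_i!\prod\lambda_j'!$, which can be strictly less than $n!$, e.g.\ for $\lambda=(2,1)$.) So: cite Sidorenko and your lower bound is complete; supply a uniform bound replacing the hand-waved large-deviation step and your upper bound is complete.
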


The lower bound in Theorem~\ref{thm:main_weak} is obtained by combining the Baik--Deift--Johansson
asymptotics for the longest increasing subsequence~\cite{BDJ99} with the Bochkov--Petrov lower
inequality for linear extensions in terms of antichain Greene--Kleitman--Fomin parameters~\cite{BP21}.
For the upper bound, we apply the Bochkov--Petrov upper inequality~\cite{BP21} and then condition on
the RSK shape: under Plancherel measure, $\Prob(\sigma_1\wbo\sigma_2)$ is bounded by an
average of an explicit Young-diagram functional $\Psi(\lambda)$.  The proof reduces to a deterministic uniform lower bound on $\Psi(\lambda)$, established via an inductive peeling argument.

Next, we first prove a quantitative lower bound on $\Prob(\sigma_1\sbo\sigma_2)$ showing
subexponential decay.

\begin{theorem}\label{thm:strong}
As $n\to\infty$,
\[
\Prob(\sigma_1\sbo\sigma_2)
\ \ge\
\exp(-(6+o(1))\sqrt{n} (\log^{3/2} n)).
\]
Equivalently,
\[
|\strongintervals{n}|
\ \ge\
(n!)^2\,\exp(-(6+o(1))\sqrt{n} (\log^{3/2} n)).
\] 
\end{theorem}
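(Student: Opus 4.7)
The plan is to invoke the tableau criterion for strong Bruhat order and then to exhibit a structured family of permutations any two of which are comparable. The criterion states that $\sigma \sbo \tau$ iff $N_\sigma(k,j) \geq N_\tau(k,j)$ for every $(k,j) \in [n]^2$, where $N_\sigma(k,j) := |\{i \leq k : \sigma(i) \leq j\}|$. Writing $D_\sigma(k,j) := N_\sigma(k,j) - kj/n$ for the deviation field, this becomes the pointwise inequality $D_\sigma \geq D_\tau$. For fixed $j$ the process $k \mapsto N_\sigma(k,j)$ has $\{0,1\}$-valued increments (sampling without replacement), so classical Azuma--Hoeffding concentration yields $\Prob(|D_\sigma(k,j)|>t) \leq 2\exp(-2t^2/n)$, hence $\max_{(k,j)}|D_\sigma(k,j)| = O(\sqrt{n\log n})$ with high probability; this is the random walk on which the rest of the argument rests.

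The first step is a pigeonhole partition. Fix a coarse subgrid $\Gamma \subseteq [n]\times[n]$ of cardinality $|\Gamma| \asymp \sqrt{n\log n}$ (for instance an $\ell \times \ell$ subgrid with $\ell \asymp (n\log n)^{1/4}$). On the high-probability event $\{\max|D_\sigma| \leq C\sqrt{n\log n}\}$, each cell of $\Gamma$ admits $O(\sqrt{n\log n})$ integer values of $D_\sigma$, so the number of realized coarse profiles is at most $(C\sqrt{n\log n})^{|\Gamma|} = \exp(O(\sqrt{n}\log^{3/2} n))$. By pigeonhole, some profile $\phi$ is attained by a class $\mathcal{F}_\phi \subseteq \sym{n}$ of size at least $n!\exp(-c\sqrt{n}\log^{3/2} n)$.

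The second step is verifying comparability within $\mathcal{F}_\phi$. I would pick a Bruhat-maximal representative $\tau\in\mathcal{F}_\phi$ (one with pointwise-minimal $N_\tau$ across the class) and show that $\sigma \sbo \tau$ for most $\sigma\in\mathcal{F}_\phi$. Since $D_\sigma$ and $D_\tau$ agree on $\Gamma$, between consecutive gridpoints both fields are mean-zero random-walk bridges of length $\sim n/\ell$; a coupling along these bridges, combined with Doob's maximal inequality, upgrades the coarse-grid agreement to the fine pointwise dominance $D_\sigma\geq D_\tau$, with the required slack absorbed by a further $O(\sqrt{\log n})$ factor in the envelope. Iterating over a canonical family of representatives obtained by perturbing the profile $\phi$, one then collects at least $(n!)^2\exp(-6\sqrt{n}\log^{3/2} n)$ comparable pairs, which gives the theorem.

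The main obstacle is the two-dimensional nature of the tableau inequality: dominance must hold simultaneously on the fine grid $[n]^2$, not just along rows or columns or at $\Gamma$. Controlling joint fluctuations of $D_\sigma$ across both coordinates requires a multi-scale chaining bound that pays $\sqrt{\log n}$ at each of $\Theta(\log n)$ dyadic scales; summing geometrically produces the extra $\sqrt{\log n}$ factor that lifts the basic envelope $\sqrt{n\log n}$ to the claimed $\sqrt{n}\log^{3/2} n$ exponent and pins down the constant $6$. A secondary subtlety is exhibiting a genuine permutation realizing the Bruhat-maximal target $\tau$ in each class, since a pointwise-minimal $N$ need not come from an actual permutation; I would handle this by a constructive staircase or concatenated-reversal argument, which loses at most subleading constants.
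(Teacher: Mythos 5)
Your overall strategy differs fundamentally from the paper's, and it has a genuine gap at its core: the comparability step. Sharing a coarse profile on a subgrid $\Gamma$ creates \emph{zero margin} between the two dominance fields. At the anchor points the fields are equal by construction, and between anchors (and, worse, at non-grid values of $j$, since your profile only pins down counts at grid rows) the difference $N_\sigma-N_\tau$ is a mean-zero fluctuation of polynomial size whose sign is symmetric. So for two independent members of the same class, the pointwise inequality $N_\sigma\ge N_\tau$ fails in each of the many essentially independent inter-anchor blocks with probability bounded away from $0$, and hence fails overall for almost every pair in the class. Choosing a ``Bruhat-maximal representative'' does not repair this: any $\tau$ in the class is tied to every other member at the anchors, so it cannot sit strictly below the typical fluctuations of most $\sigma$ there, and a pointwise-minimal $N$ over the class is (as you note) not realized by a permutation. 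The appeals to coupling, Doob's maximal inequality, and multi-scale chaining control the \emph{magnitude} of fluctuations, but what is needed is one-sided dominance between two independent samples with no drift separating them; no amount of chaining produces the missing slack. Refining $\Gamma$ in the $j$-direction to fix all rows would create the needed rigidity but destroys the entropy count $\exp(O(\sqrt{n}\log^{3/2}n))$, so the scheme cannot be patched by adjusting the grid. Consequently the claimed harvest of $(n!)^2\exp(-6\sqrt{n}\log^{3/2}n)$ comparable pairs is not justified.

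The paper's proof manufactures exactly the buffer your construction lacks. It takes $t=3\sqrt{n\log n}$ and two \emph{different} block families: $\Sigma_1^{(t)}$, whose members list a permutation of $\{1,\dots,t\}$, then of $\{t+1,\dots,n-t\}$, then of $\{n-t+1,\dots,n\}$, and $\Sigma_2^{(t)}$ with the outer blocks swapped. For a pair $(\pi_1,\pi_2)\in\Sigma_1^{(t)}\times\Sigma_2^{(t)}$, the tableau criterion is checked via a walk recording $|S(\pi_2,k)\cap\{n,\dots,n-m+1\}|-|S(\pi_1,k)\cap\{n,\dots,n-m+1\}|$: the block structure forces the first $t$ steps upward, so the walk starts with a deterministic lead of $t$, and in the middle range the walk is (after a Bernoulli comparison and conditioning on the event $\mathcal{I}$, costing only a factor $n^2$) an unbiased process whose deviation below its mean $t$ is controlled by McDiarmid's inequality, giving failure probability $o(n^{-2})$ per $(k,k')$ and $1-o(1)$ comparability after a union bound. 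The count $|\Sigma_1^{(t)}||\Sigma_2^{(t)}|=\bigl((t!)^2(n-2t)!\bigr)^2=(n!)^2\exp(-(6+o(1))\sqrt{n}\log^{3/2}n)$ then yields the theorem; note the constant $6$ comes from the entropy lost to the two blocks of size $t$ in each family, not from a chaining argument. If you want to salvage your approach, you would need to build a comparable separation mechanism (a systematic gap of order $\sqrt{n\log n}$ between the two fields) rather than conditioning two uniform permutations onto a common profile.
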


In particular, $\Prob(\sigma_1\sbo\sigma_2)$ decays strictly slower than $\exp(-cn)$ for all constant
$c>0$. The proof of Theorem~\ref{thm:strong} is constructive and given in Section~\ref{sec:strong}.
We exhibit two large structured families of permutations, such that a pair of permutations from their cartesian product is comparable in strong Bruhat order with probability $1-o(1)$. Comparability is verified via the tableau (Gale-order) criterion, which
reduces the required inequalities to a family of random-walk-type deviations controlled by concentration estimates. The number of pairs taken from the cartesian product of these structured families is a subexponential fraction of $(n!)^2$ and yields the stated lower bound.

Hammett and Pittel~\cite{HP08} initiated a systematic study of comparability probabilities in
Bruhat-type orders.  
For the weak order, writing $H(i)=\sum_{j\le i}\frac1j$, they obtained the bounds
\[
(n!)\prod_{i=1}^n H(i)
\ \le\
|\weakintervals{n}|
\ \le\
(n!)^2(0.362)^n.
\]
We note that these bounds are quite far apart; $\prod_{i=1}^{n} H(i) = \exp(O(n \log\log n))$ so these bounds imply that 
\begin{equation}
(1+o(1)) n \log n \leq \log |\mathcal{W}_n| \leq (2+o(1)) n \log n. \label{eqn:log}
\end{equation}
For the strong Bruhat order they proved
\[
(0.708)^n \ \le\ \Prob(\sigma_1\sbo\sigma_2)\ \le\ \frac{1}{n^2}.
\]

In this paper, we significantly improve three of the four bounds of Hammett--Pittel. For the weak order we show that the truth is, in a sense, exactly in the middle of Hammett and Pittel's bounds.  We determine precisely the asymptotics of $\log |\mathcal{W}_n|$, showing that \[\log |\mathcal{W}_n| = \left(\frac{3}{2}+o(1)\right)n \log n,\] exactly in the middle of \eqref{eqn:log}.  For the strong order, we improve the lower bound from exponential decay (\emph{i.e.}, $(0.708)^n$) to subexponential 
(of the form $\exp(-O(\sqrt{n \log^{3 n}}))$).
Despite numerical experiments from Hammett and Pittel suggesting that both of their upper bounds were qualitatively close to the true value, we show that the probability comparability for the weak order, governed by RSK shapes under the Plancherel measure, exhibits superexponential decay.

\subsection{Organization}
Section~\ref{sec:background} collects background and notation.  This includes the analytic estimates (Stirling, McDiarmid, the log-sum inequality), the characterizations of weak and strong Bruhat order, the RSK/Plancherel framework, and the poset-theoretic reduction expressing weak-order comparability in terms of linear extensions of permutation posets.  Section~\ref{sec:weak} proves the weak-order asymptotics (Theorem~\ref{thm:main_weak}).  The lower bound is obtained in Section~\ref{subsec:weak_lower} by combining typical longest-increasing-subsequence behavior with the Bochkov--Petrov lower inequality.  The upper bound is developed in two steps.  First, in Section~\ref{subsec:upper_to_Psi} we rewrite the probability as a Plancherel average and reduce the problem to a uniform exponent bound for a Young-diagram functional.  This we prove in Section~\ref{subsec:Psi_reduction}, by an inductive argument. In Section~\ref{sec:strong} we prove the strong-order lower bound (Theorem~\ref{thm:strong}).  The argument exhibits large structured families of permutations and verifies comparability with high probability via the tableau (Gale-order) criterion.

\subsection{Acknowledgements}

This collaboration began at the Graduate Research Workshop in Combinatorics (GRWC) 2025. The authors are grateful for the opportunity to collaborate despite coming from different areas of combinatorics, which would not have materialized without the environment and support provided by the GRWC. We also are grateful for productive conversations with Karimatou Djenabou, Alex Moon, and Sheila Sundaram in early stages of this project.

\section{Background}\label{sec:background}
In this section we collect the requisite background material needed to prove our bounds.

\subsection{Analytic Tools}
We begin by fixing notation for asymptotic comparisons; we follow standard conventions (see, e.g., \cite{GKP94}).

\begin{definition}\label{def:bigOsmallonotation}
Let $A\subset\mathbb{R}$ be such that $A\cap\mathbb{R}_{>0}$ is unbounded. (In practice, this will be an unbounded subset of the natural numbers.) Let $f,g \colon A \to \mathbb{R}$, and assume $g(n)>0$ for all $n\in A$. 

\begin{itemize}
    \item We write $f(n)=O(g(n))$ if there exists a constant $C>0$ and $n_0 \in \mathbb{N}$ such that
\[
|f(n)| \le C g(n) \quad \text{for all } n\in A \text{ with } n \ge n_0.
\]

\item We write $f(n)=o(g(n))$ if $\lim_{n\to\infty} \frac{f(n)}{g(n)} = 0$, where the limit is taken along $n\in A$.

\item We write $f(n)=\Theta(g(n))$ if there exists a constant $C>0$ and $n_0 \in \mathbb{N}$ such that
\[
\frac{1}{C}g(n)\le |f(n)| \le C g(n) \quad \text{for all } n\in A \text{ with } n \ge n_0.
\]
Equivalently, $f(n)=\Theta(g(n))$ if and only if $f(n)=O(g(n))$ and $g(n)=O(f(n))$. 

\item We write $f(n)\sim g(n)$ if $\lim_{n\rightarrow\infty} \frac{f(n)}{g(n)}=1$, where the limit is taken along $n\in A$.
\end{itemize}
\end{definition}

We interpret asymptotic terms inside algebraic expressions by algebraically isolating the asymptotic notation. For instance, if $f,g,h \colon A \to \mathbb{R}$ with $h(n)>0$ for all $n\in A$, then the statement $f(n)=g(n)+o(h(n))$ means $f(n)-g(n)=o(h(n))$. In particular, the statements $f(n) \sim g(n)$, $f(n)=(1+o(1))g(n)$, and $f(n)=(1-o(1))g(n)$ are equivalent.

We use the conventions $0! \deftobe 1$ and $0\log 0 \deftobe 0$. Explicitly, we extend $x\mapsto x\log x$ continuously at $x=0$.

We now discuss a fundamental asymptotic approximation that we will make heavy use of in this paper. \textit{Stirling's approximation} gives an asymptotic bound on the growth rate of the factorial function. 

\begin{theorem}[Stirling \cite{Stirling1730}] \label{thm:Stirlingapproximation}
    For $n\in\mathbb{N}$, we have \[n!\sim \sqrt{2\pi n}\left(\frac{n}{e}\right)^n.\]
\end{theorem}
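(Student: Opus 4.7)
The plan is to establish Stirling's formula in two independent steps: first show that $n!/[\sqrt{n}(n/e)^n]$ converges to some positive constant $C$, and then identify $C = \sqrt{2\pi}$ via Wallis's product. Concretely, set
\[
a_n \deftobe \log n! - \bigparens{n + \tfrac12}\log n + n,
\]
so that $n! = e^{a_n}\sqrt{n}(n/e)^n$, and it suffices to show $a_n$ converges to $\tfrac12 \log(2\pi)$.

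For the convergence step, I would compute the telescoping increment
\[
a_n - a_{n+1} = \bigparens{n + \tfrac12}\log\bigparens[\Big]{1 + \tfrac1n} - 1
\]
and expand $\log(1 + 1/n) = 1/n - 1/(2n^2) + 1/(3n^3) - \cdots$ to obtain $a_n - a_{n+1} = O(n^{-2})$, and in fact strictly positive. Monotonicity and summability together imply $a_n \to L$ for some finite $L$, so $n! \sim C\sqrt{n}(n/e)^n$ with $C = e^L > 0$.

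For the constant, I would invoke Wallis's product in the form
\[
\frac{\pi}{2} \;=\; \lim_{n\to\infty} \frac{2^{4n}(n!)^4}{((2n)!)^2 (2n+1)}.
\]
Substituting $n! \sim C\sqrt{n}(n/e)^n$ and $(2n)! \sim C\sqrt{2n}(2n/e)^{2n}$, the factors $2^{4n}$ and $(n/e)^{4n}$ cancel against $(2n/e)^{4n}$ and the leading powers of $n$ collapse to yield the ratio $\tfrac{C^2 n}{2(2n+1)} \to C^2/4$. Matching $C^2/4 = \pi/2$ gives $C = \sqrt{2\pi}$.

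The only delicate point is verifying that $a_n - a_{n+1} > 0$ and summable; this is a purely calculus check via the Taylor series of $\log(1+x)$, so the argument has no substantive obstacle beyond bookkeeping. The use of Wallis's product itself is standard (via the recursion for $\int_0^{\pi/2}\sin^{2n}\theta\, d\theta$) and could equally well be replaced by Laplace's method applied to $\Gamma(n+1) = \int_0^\infty t^n e^{-t}\,dt$, substituting $t = n + s\sqrt{n}$ and identifying the Gaussian integral $\int_{-\infty}^\infty e^{-s^2/2}\,ds = \sqrt{2\pi}$ as the source of the constant.
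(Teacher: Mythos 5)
Your proposal is correct, but there is nothing in the paper to compare it against: the paper states Stirling's approximation as a cited classical result (Theorem~\ref{thm:Stirlingapproximation}) and gives no proof, so any complete argument is necessarily ``your own route.'' The route you choose is the standard De Moivre--Stirling argument and it checks out: the telescoping identity $a_n - a_{n+1} = (n+\tfrac12)\log(1+\tfrac1n) - 1$ is correct, its expansion gives $\tfrac{1}{12n^2} + O(n^{-3})$, hence $a_n$ is decreasing and its increments are summable, so $a_n \to L$ and $n! \sim C\sqrt{n}(n/e)^n$ with $C = e^L$; and the Wallis partial product $\prod_{k=1}^{n} \frac{(2k)^2}{(2k-1)(2k+1)} = \frac{2^{4n}(n!)^4}{((2n)!)^2(2n+1)}$ combined with the asymptotic for $n!$ and $(2n)!$ does collapse to $\frac{C^2 n}{2(2n+1)} \to C^2/4$, forcing $C=\sqrt{2\pi}$. (Strictly, summability of the increments already gives convergence; the monotonicity is a bonus, not needed.) Your alternative of Laplace's method on $\Gamma(n+1)$ would work equally well but requires a dominated-convergence or tail-splitting justification that the elementary route avoids. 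It is worth noting that for everything the paper actually does, the precise constant $\sqrt{2\pi}$ is never used: Remark~\ref{rem:weakerstirlingapproximation} ($\log n! = n\log n - n + O(\log n)$) and the crude bound of Theorem~\ref{thm:crude-stirling} ($\log n! \ge n\log n - n$) follow already from boundedness of your $a_n$ (respectively from comparing $\log n! = \sum_{k=1}^n \log k$ with $\int_1^n \log x\,dx$), so the Wallis step, while needed for the theorem as stated, is not needed downstream.
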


\begin{remark}\label{rem:weakerstirlingapproximation}
It follows from Theorem~\ref{thm:Stirlingapproximation} that
\[
\log(n!)\ =\ n\log n - n + O(\log n)
\ =\ n\log n - (1+o(1))\,n.
\]
Equivalently,
\[
n!\ =\ \exp\!\hspace{.28mm}\bigl(n\log n - (1+o(1))\,n\bigr).
\]

\end{remark}

In fact, a classical related result guarantees the following crude version of Stirling's approximation due to \cite{R55}.

\begin{theorem}\label{thm:crude-stirling}
    
For all $n\in \mathbb{N}$ including $n=0$, we have 

\[\log(n!)\geq n\log n-n.\]

\end{theorem}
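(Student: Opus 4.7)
The cleanest route is via the Taylor series for the exponential. For $n \geq 1$, observe that since all terms in the series $e^n = \sum_{k \geq 0} n^k/k!$ are non-negative, we may retain only the $k = n$ term to obtain the one-line bound
\[
e^n \;\geq\; \frac{n^n}{n!},
\]
which rearranges to $n! \geq n^n/e^n$. Taking logarithms of both sides yields exactly $\log(n!) \geq n\log n - n$, as desired. The case $n=0$ is then handled separately: under the stated conventions $0! = 1$ and $0\log 0 = 0$, both sides evaluate to $0$, so the inequality holds with equality.

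An alternative I would mention, in case the Taylor approach is viewed as indirect, is an integral comparison. Because $\log$ is non-decreasing, $\log k \geq \int_{k-1}^{k} \log x\,dx$ for each $k \geq 1$, so
\[
\log(n!) \;=\; \sum_{k=1}^{n} \log k \;\geq\; \int_{0}^{n} \log x\,dx \;=\; n\log n - n,
\]
where the integral is evaluated using $\lim_{x\to 0^+} x\log x = 0$. Either approach is essentially a two-line computation, so there is no substantive obstacle. The only subtlety worth flagging is the boundary case $n=0$, which requires explicit invocation of the extension $0\log 0 := 0$ fixed in the preceding paragraph of the paper; once that convention is in place, the statement is immediate from either derivation.
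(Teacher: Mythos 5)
Your proposal is correct, and in fact it does more than the paper does: the paper offers no proof of this statement at all, simply citing the classical reference (Robbins' refinement of Stirling's formula, which gives the sharper bound $n!\geq\sqrt{2\pi n}\,(n/e)^n$ and hence the crude inequality as a corollary). Your first argument---discarding all but the $k=n$ term of $e^n=\sum_{k\ge 0}n^k/k!$ to get $e^n\geq n^n/n!$---is a genuinely self-contained one-line derivation that avoids any appeal to the full Stirling asymptotics, and your integral comparison $\log(n!)=\sum_{k=1}^n\log k\geq\int_0^n\log x\,dx=n\log n-n$ is an equally valid alternative (and is in the same spirit as the Riemann-sum argument the paper itself uses for Proposition on $\sum i^{-1/2}$). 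Your handling of $n=0$ via the conventions $0!=1$ and $0\log 0=0$ is exactly what the paper's statement requires, and is the one boundary case worth flagging, as you did. The trade-off is minor: the citation route gives the stronger quantitative form for free, while your arguments keep the paper self-contained at the cost of proving only the crude bound---which is all that is used in Lemma~\ref{lem:hook-uniform} and Lemma~\ref{lem:skinny}.
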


In the proof of Lemma~\ref{lem:strongLB}, we will make use of the following concentration inequality of McDiarmid to show some statistic does not deviate too far from its expectation. 

\begin{proposition}[McDiarmid \cite{M89}] \label{prop:mcdiarmid}
 Suppose $X_1, X_2, \dots, X_T$ are independent random variables taking values in $\mathcal{X}_i$, and $f:\mathcal{X}_1 \times \dots \times \mathcal{X}_T \to \mathbb{R}$ is such that for all inputs and all $i\in[T]$ \[
\big|f(x_1, \dots, x_{i-1}, x_i, x_{i+1}, \dots, x_T) - f(x_1, \dots, x_{i-1}, \hat{x}_i, x_{i+1} \dots x_T)\big| \leq c_i,\]
that is, changing the $i$th coordinate changes the value of $f$ by at most $c_i$.  Then
\[
\Prob\big(f(X_1, \dots, X_T) \leq  \e[f(X_1, \dots, X_T)] - \lambda \big) \leq \exp\left(- \frac{2\lambda^2}{\sum c_{i}^2} \right) 
\]

\end{proposition}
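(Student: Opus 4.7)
The plan is to prove this via the standard Doob-martingale route, following the original argument of McDiarmid. Fix the filtration $\mathcal{F}_i = \sigma(X_1,\ldots,X_i)$ with $\mathcal{F}_0$ the trivial $\sigma$-algebra, and introduce the Doob martingale
\[
M_i \deftobe \e[f(X_1,\ldots,X_T) \mid \mathcal{F}_i], \qquad i = 0,1,\ldots,T,
\]
so that $M_0 = \e[f(X_1,\ldots,X_T)]$ and $M_T = f(X_1,\ldots,X_T)$. The martingale differences $D_i \deftobe M_i - M_{i-1}$ then telescope, giving $f - \e[f] = \sum_{i=1}^T D_i$, and the target inequality reduces to a one-sided Azuma-Hoeffding bound for $\sum D_i$.

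The core step is verifying that each $D_i$ is essentially bounded by $c_i$. First I would exploit the independence of the $X_j$ to rewrite
\[
M_i = \e\bigl[f(X_1,\ldots,X_T)\mid \mathcal{F}_i\bigr] = g_i(X_1,\ldots,X_i),
\]
where $g_i(x_1,\ldots,x_i) \deftobe \e[f(x_1,\ldots,x_i,X_{i+1},\ldots,X_T)]$, and analogously $M_{i-1} = g_{i-1}(X_1,\ldots,X_{i-1}) = \e[g_i(X_1,\ldots,X_{i-1},X_i)\mid\mathcal{F}_{i-1}]$. Using the given bounded-differences hypothesis pointwise inside the expectation defining $g_i$, one gets that for any fixed $(x_1,\ldots,x_{i-1})$ and any two possible values $x_i,\hat x_i$,
\[
\bigl|g_i(x_1,\ldots,x_{i-1},x_i) - g_i(x_1,\ldots,x_{i-1},\hat x_i)\bigr| \le c_i.
\]
Consequently, conditional on $\mathcal{F}_{i-1}$, the random variable $D_i = g_i(X_1,\ldots,X_i) - \e[g_i\mid\mathcal{F}_{i-1}]$ lies in an interval of length at most $c_i$ and has conditional mean zero.

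Next I would apply Hoeffding's lemma conditionally: if a random variable $Y$ takes values in an interval of length $\ell$ with $\e[Y]=0$, then $\e[e^{sY}]\le e^{s^2\ell^2/8}$. Applied conditionally on $\mathcal{F}_{i-1}$, this gives $\e[e^{sD_i}\mid\mathcal{F}_{i-1}]\le e^{s^2c_i^2/8}$. Iterating via the tower property yields
\[
\e\!\left[\exp\!\Bigl(-s\sum_{i=1}^T D_i\Bigr)\right] \le \exp\!\Bigl(\tfrac{s^2}{8}\sum_{i=1}^T c_i^2\Bigr).
\]
A Chernoff-type bound then gives, for any $s>0$,
\[
\Prob\bigl(f(X_1,\ldots,X_T) - \e[f] \le -\lambda\bigr) \le \exp\!\Bigl(-s\lambda + \tfrac{s^2}{8}\sum_{i=1}^T c_i^2\Bigr),
\]
and optimizing at $s = 4\lambda/\sum c_i^2$ produces the claimed exponent $-2\lambda^2/\sum c_i^2$.

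The only genuinely subtle point is the bounded-differences bound on $D_i$: without independence of the $X_j$ the identity $M_i = g_i(X_1,\ldots,X_i)$ can fail, and the conditional Lipschitz property can be lost. Once independence is invoked to pass the bounded-differences hypothesis through the inner expectation, the rest is a routine application of Hoeffding's lemma and the Chernoff method, and there are no further obstacles.
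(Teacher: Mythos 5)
Your proof is correct, and it is essentially the canonical argument: the paper does not prove Proposition~\ref{prop:mcdiarmid} at all but simply cites McDiarmid \cite{M89}, and your Doob-martingale route (independence giving $M_i=g_i(X_1,\dots,X_i)$, the bounded-differences hypothesis passing through the inner expectation, conditional Hoeffding's lemma on an interval of length $c_i$, then the Chernoff optimization at $s=4\lambda/\sum c_i^2$) is exactly the standard proof from that source. In particular you correctly use the interval-of-length-$c_i$ form of the conditional increment bound, which is what yields the sharp constant $2$ in the exponent rather than the weaker $1/2$ one would get from a naive $|D_i|\le c_i$ Azuma bound.
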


In the proof of Theorem~\ref{thm:Psi}, we use the log-sum inequality, which is a useful corollary of Jensen's theorem~\cite{Jensen06} frequently used in information theory.  

\begin{proposition}[Log-sum inequality \cite{CT91}] \label{prop:logsum}
Suppose $a_1, a_2, \dots, a_n$ and $b_1, \dots, b_n$ are non-negative numbers.  Then \[
\sum_{i=1}^{n} a_i \log \frac{a_i}{b_i} \geq \left(\sum_{i=1}^n a_i\right)  \log \left( \frac{\sum a_i}{\sum b_i} \right).
\]	
\end{proposition}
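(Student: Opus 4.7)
The plan is to derive the log-sum inequality from the weighted Jensen inequality applied to the convex function $f(x) = x\log x$ on $[0,\infty)$, extended continuously by $f(0) = 0$.

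First I would dispose of the boundary cases that the hypotheses permit. Terms with $a_i = b_i = 0$ contribute $0$ to both sides under the convention $0\log 0 = 0$ and may be discarded. If some $b_i = 0$ while $a_i > 0$, then $a_i \log(a_i/b_i) = +\infty$, so the left-hand side is $+\infty$ and the inequality holds vacuously. If all $a_i = 0$ both sides vanish. After these reductions, I may assume without loss of generality that every $b_i > 0$ and that $\sum_i a_i > 0$.

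Next I would set $B = \sum_{j=1}^n b_j > 0$ and $w_i = b_i / B$, so that $(w_i)_{i=1}^n$ is a probability distribution. Evaluating $f$ at the points $x_i = a_i / b_i$ and applying the weighted Jensen inequality yields
\[
\sum_{i=1}^n w_i\, f(x_i) \;\geq\; f\!\left(\sum_{i=1}^n w_i x_i\right).
\]
A direct computation gives $w_i f(x_i) = (b_i/B)\,(a_i/b_i)\log(a_i/b_i) = (1/B)\,a_i \log(a_i/b_i)$, so the left-hand side equals $\tfrac{1}{B}\sum_i a_i \log(a_i/b_i)$. Similarly, $\sum_i w_i x_i = \tfrac{1}{B}\sum_i a_i$, so the right-hand side equals $\tfrac{\sum_i a_i}{B}\,\log\!\bigl(\tfrac{\sum_i a_i}{B}\bigr)$. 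Multiplying both sides by $B$ produces exactly the claimed inequality.

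The only subtle point, and hence the ``main obstacle'' in an otherwise one-line argument, is bookkeeping for the cases where some $b_i$ vanish, since the ratios $a_i/b_i$ are then undefined; this is handled unambiguously by the conventions $0\log 0 = 0$ and $a\log(a/0) = +\infty$ for $a > 0$, as above. Beyond this, the result is an immediate consequence of the convexity of $x\log x$.
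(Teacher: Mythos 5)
Your proof is correct and takes exactly the approach the paper itself indicates: it states this proposition without proof, citing Cover--Thomas, and notes just before the statement that the log-sum inequality ``is a useful corollary of Jensen's theorem,'' which is precisely the argument you carry out by applying weighted Jensen to the convex function $x\mapsto x\log x$ with weights $b_i/\sum_j b_j$ at the points $a_i/b_i$. The bookkeeping of the degenerate cases ($b_i=0$, all $a_i=0$) is handled cleanly and completes the argument.
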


Finally, we record one other simple fact that we also use in the proof of Theorem \ref{thm:Psi} below.

\begin{proposition} \label{prop:sumi} For all integers $n \geq 1$,  
\[	\sum_{i=1}^{n} \frac{1}{\sqrt{i}} \leq \int_{0}^{n} \frac{1}{\sqrt{x}}dx = 2\sqrt{n}. \] 
\end{proposition}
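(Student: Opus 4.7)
The plan is to prove both the inequality and the equality by elementary calculus, using the monotonicity of $x \mapsto 1/\sqrt{x}$. The equality $\int_0^n x^{-1/2}\,dx = 2\sqrt{n}$ is a direct antidifferentiation: the antiderivative is $2\sqrt{x}$, and the improper integral converges at $0$ since $1/\sqrt{x}$ is integrable on $(0,1]$.

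For the inequality, I would exploit the fact that $f(x) = 1/\sqrt{x}$ is strictly decreasing on $(0,\infty)$. For each integer $i$ with $1 \leq i \leq n$ and each $x \in (i-1, i]$, monotonicity gives $1/\sqrt{i} \leq 1/\sqrt{x}$. Integrating this pointwise inequality over $[i-1,i]$ (interpreting the integral as improper when $i=1$) yields
\[
\frac{1}{\sqrt{i}} \ =\ \int_{i-1}^{i} \frac{1}{\sqrt{i}}\,dx \ \leq\ \int_{i-1}^{i} \frac{1}{\sqrt{x}}\,dx.
\]
Summing this over $i = 1, 2, \ldots, n$ telescopes the intervals $[i-1,i]$ into $[0,n]$:
\[
\sum_{i=1}^{n} \frac{1}{\sqrt{i}} \ \leq\ \sum_{i=1}^{n} \int_{i-1}^{i} \frac{1}{\sqrt{x}}\,dx \ =\ \int_{0}^{n} \frac{1}{\sqrt{x}}\,dx \ =\ 2\sqrt{n}.
\]

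There is no substantive obstacle; the only mild care required is handling the improper integral at $x=0$ on the first subinterval $[0,1]$, which is routine since $\int_0^1 x^{-1/2}\,dx = 2$ exists as a convergent improper Riemann integral.
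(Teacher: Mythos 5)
Your proof is correct and is essentially the paper's argument: the paper justifies the inequality in one line by noting that $\sum_{i=1}^{n} i^{-1/2}$ is a right-hand Riemann sum for the decreasing function $x^{-1/2}$ on $[0,n]$, which is exactly your interval-by-interval monotonicity comparison, and the evaluation $\int_0^n x^{-1/2}\,dx = 2\sqrt{n}$ is the same routine antidifferentiation. Your extra care with the improper integral at $x=0$ is a fine (if unstated in the paper) touch, but not a substantive difference.
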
 
This follows by observing that $\sum_{i=1}^{n} i^{-1/2}$ is a right-hand Riemann sum for the integral of the decreasing function $x^{-1/2}$.  

\subsection{Bruhat Orders}

For a detailed introduction to weak and strong Bruhat orders on $\sym{n}$ (and their Coxeter group context), see Bj\"orner--Brenti~\cite{BB05}. Let $\sym{n}$ denote the symmetric group on $n$ letters, generated by the simple transpositions
\[
s_i = (i \;\; i+1), \quad 1 \le i \le n-1.
\]
For $\sigma \in \sym{n}$, the \emph{length} $\ell(\sigma)$ is the minimal number of simple transpositions needed to express $\sigma$ as a product. An expression of $\sigma$ as a product of $\ell(\sigma)$ simple transpositions is called a \emph{reduced expression}.

Both weak and strong Bruhat order are partial orders on $\sym{n}$ defined in terms of expressions for permutations. 

\begin{definition}\label{weak-bruhat-order}
Let $\sigma,\omega \in \sym{n}$. We say that $\sigma$ is less than or equal to $\omega$ in the \textit{(right) weak Bruhat order}, denoted $\sigma \wbo \omega$, if there exists a reduced expression
\[
\omega = s_{i_1}s_{i_2}\cdots s_{i_k}
\]
such that $\sigma = s_{i_1}s_{i_2}\cdots s_{i_m}$ for some $m \le k$.
\end{definition}

In other words, $\sigma$ is obtained as an \textit{initial segment} of a reduced expression for $\omega$. We note that there is a natural dual notion of \textit{left weak order} defined using final segments instead of initial segments, but we will not use it in this paper. The strong Bruhat order relaxes the prefix condition by allowing more general subexpressions.

\begin{definition}\label{def:strong-bruhat-order}
Let $\sigma,\omega \in \sym{n}$. We say that $\sigma$ is less than or equal to $\omega$ in the
\textit{strong Bruhat order}, denoted $\sigma \sbo \omega$, if there exists a reduced expression
\[
\omega = s_{i_1}s_{i_2}\cdots s_{i_k}
\]
and a subset of indices $1\le j_1<\cdots< j_m\le k$ such that
\[
\sigma = s_{i_{j_1}}s_{i_{j_2}}\cdots s_{i_{j_m}}.
\]
Equivalently (although not obviously so), there exists such a subsequence of indices for every reduced expression of $\omega$.
\end{definition}

Thus, while weak order allows only prefixes of reduced expressions, strong Bruhat order allows arbitrary subexpressions. Accordingly, weak Bruhat order is a coarser order on $\sym{n}$.

We now continue with a different combinatorial characterization for each of these Bruhat orders that does not use expressions. The following is well-established, with a direct proof appearing in \cite{BW91}. 

\begin{theorem}\label{thm:inversion-set-for-weak-order}
For $\sigma,\omega \in \sym{n}$,
\[
\sigma \wbo \omega
\quad \text{if and only if} \quad
\mathrm{Inv}(\sigma^{-1}) \subseteq \mathrm{Inv}(\omega^{-1}),
\]
where
\[
\mathrm{Inv}(\sigma) = \{(i,j) : i<j,\ \sigma(i)>\sigma(j)\}
\]
denotes the inversion set of a permutation $\sigma$.
\end{theorem}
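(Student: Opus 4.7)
My plan is to prove the biconditional by two coupled inductive arguments, anchored on the following basic observation. For any $\pi \in \sym{n}$, right multiplication by $s_i$ swaps the values in positions $i$ and $i+1$ of the one-line notation of $\pi$ and leaves every other position fixed; in particular, if $\pi(i) < \pi(i+1)$ then $\ell(\pi s_i) = \ell(\pi) + 1$ and
\[
\Inv((\pi s_i)^{-1}) \ =\ \Inv(\pi^{-1}) \cup \{(\pi(i),\pi(i+1))\},
\]
while otherwise length drops by one and the pair $(\pi(i+1),\pi(i))$ is removed from $\Inv(\pi^{-1})$. This identity is a direct check from the definition of $\Inv$.

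For the forward direction, I would fix a reduced expression $\omega = s_{i_1}\cdots s_{i_k}$ with $\sigma = s_{i_1}\cdots s_{i_m}$ as a prefix, and set $\omega_j = s_{i_1}\cdots s_{i_j}$ (taking $\omega_0$ to be the identity). Reducedness forces $\ell(\omega_j) = j$, so each successive right multiplication is length-increasing and, by the basic observation, adjoins a single element to the inversion set. This yields a strictly increasing chain $\Inv(\omega_0^{-1}) \subsetneq \Inv(\omega_1^{-1}) \subsetneq \cdots \subsetneq \Inv(\omega_k^{-1})$, so $\Inv(\sigma^{-1}) = \Inv(\omega_m^{-1}) \subseteq \Inv(\omega^{-1})$.

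For the reverse direction, I would induct on $d = |\Inv(\omega^{-1})| - |\Inv(\sigma^{-1})|$, which equals $\ell(\omega) - \ell(\sigma)$ under the containment hypothesis. The base case $d = 0$ forces $\sigma = \omega$, since a permutation is determined by its inversion set. For $d \ge 1$, the key step is a lifting lemma: there exists $i$ with $\sigma(i) < \sigma(i+1)$ and $(\sigma(i),\sigma(i+1)) \in \Inv(\omega^{-1})$. Given such $i$, set $\sigma' = \sigma s_i$; by the basic observation, $\ell(\sigma') = \ell(\sigma) + 1$ and $\Inv((\sigma')^{-1}) \subseteq \Inv(\omega^{-1})$, so the inductive hypothesis yields $\sigma' \wbo \omega$, and appending $s_i$ to any reduced word for $\sigma$ produces a reduced prefix of $\omega$, proving $\sigma \wbo \omega$.

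The main obstacle is the lifting lemma itself. I would prove it by choosing a violated pair $(a,b) \in \Inv(\omega^{-1}) \setminus \Inv(\sigma^{-1})$ that minimizes the position-distance $\sigma^{-1}(b) - \sigma^{-1}(a)$ and showing this distance equals $1$. Setting $p = \sigma^{-1}(a)$ and $c = \sigma(p+1)$, a short case analysis on whether $c < a$, $a < c < b$, or $c > b$ invokes the elementary structural constraint that for any triple of values $x < y < z$ and any $\pi \in \sym{n}$, the set $\{(x,y),(x,z),(y,z)\} \cap \Inv(\pi^{-1})$ can never equal $\{(x,z)\}$ alone nor $\{(x,y),(y,z)\}$. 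In each case, the conclusion is that either $(a,c)$ (at the adjacent positions $p,p+1$) is itself a violated pair, giving $i = p$ directly, or else one of $(c,b)$, $(c,a)$ is violated with strictly smaller position-distance than $(a,b)$, contradicting minimality.
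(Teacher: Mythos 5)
Your proposal is correct, and it is a genuine proof where the paper offers none: the paper simply records this equivalence as well established and cites Bj\"orner--Wachs for a direct proof, so there is no in-paper argument to match. Your route is the standard one from the literature, and it is complete in outline: the single-step identity $\Inv((\pi s_i)^{-1})=\Inv(\pi^{-1})\cup\{(\pi(i),\pi(i+1))\}$ for an ascent at $i$ gives the forward direction immediately (prefixes of a reduced word are reduced, so each step adds exactly one inversion), and the reverse direction follows by induction on $\lvert\Inv(\omega^{-1})\rvert-\lvert\Inv(\sigma^{-1})\rvert$ once the lifting lemma is in place; your minimal-position-distance argument for the lifting lemma does work in all three cases $c<a$, $a<c<b$, $c>b$. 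Two small points you should tighten in a written version: first, in the case analysis the pair $(c,a)$ with $c<a$ can never be the violated pair (it always lies in $\Inv(\sigma^{-1})$, since $a$ precedes $c$ in $\sigma$ and $c<a$); the pairs that actually carry the argument are $(a,c)$ at adjacent positions (done directly) and $(c,b)$ at strictly smaller distance (contradiction), so your "one of $(c,b)$, $(c,a)$" should just be $(c,b)$. Second, the final step "appending $s_i$ to any reduced word for $\sigma$ produces a reduced prefix of $\omega$" implicitly uses that in a reduced expression for $\omega$ whose prefix represents $\sigma'=\sigma s_i$, that prefix may be replaced by \emph{any} reduced word for $\sigma'$ (the total length is unchanged, so the new expression is still reduced); one sentence making this substitution explicit closes the argument. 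Neither issue is a gap in the idea, and your self-contained exchange-and-lifting proof is exactly the kind of direct verification the paper delegates to its reference.
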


This characterization makes weak order particularly transparent combinatorially: moving upward in the weak order corresponds to adding inversions in the inverse.

One classical formulation of the strong Bruhat order is the tableau criterion, which relates strong Bruhat order to an order on sets called Gale order. While the relation to tableaux is obscured in our presentation, the tableau criterion is equivalent to element-wise comparison of a judiciously chosen pair of semistandard Young tableaux, as detailed in \cite{BB05}.

\begin{definition}\label{def:Gale-order}
Let $A,B \subseteq \mathbb{Z}$ be finite subsets with $|A|=|B|=k$. Write
\[
A = \{a_1 < a_2 < \cdots < a_k\}, \qquad
B = \{b_1 < b_2 < \cdots < b_k\}.
\]
We say that $A$ is \emph{less than or equal to $B$ in Gale order}, and write
\[
A \gale B,
\]
if
\[
a_i \le b_i \quad \text{for all } i = 1,\dots,k.
\]
\end{definition}

\begin{theorem}[Tableau criterion, \cite{BB05}]\label{thm:tableau-criterion}
Let $\sigma,\omega \in \sym{n}$. Then
\[
\sigma \sbo \omega
\]
if and only if for each $k\in [n]$, we have 

\[\{\sigma(1),\ldots,\sigma(k)\}\gale \{\omega(1),\ldots,\omega(k)\}.\]
\end{theorem}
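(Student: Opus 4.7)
The plan is to prove the two directions separately by induction on appropriate length parameters, reducing each to the analysis of a single cover relation in strong Bruhat order.

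For the forward direction, I would induct on $\ell(\omega) - \ell(\sigma)$. The base case $\sigma = \omega$ is trivial. For the inductive step it suffices to check that the Gale conditions are preserved under a single cover $\sigma \lessdot \omega$. By the standard combinatorial description, such a cover has the form $\omega = \sigma\cdot(a,b)$ with $a<b$, $\sigma(a)<\sigma(b)$, and no intermediate index $a<c<b$ satisfying $\sigma(a)<\sigma(c)<\sigma(b)$. This transposition swaps the entries of $\sigma$ in positions $a$ and $b$, so $\{\sigma(1),\ldots,\sigma(k)\}$ and $\{\omega(1),\ldots,\omega(k)\}$ agree for $k<a$ and $k\geq b$, while for $a\leq k < b$ the latter set is obtained from the former by replacing $\sigma(a)$ by the strictly larger $\sigma(b)$. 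Replacing an element of a set by a larger one can only raise each entry of the sorted sequence, so the Gale condition is preserved at every level.

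For the backward direction, I would induct on $\ell(\omega)$. If $\omega$ is the identity then the Gale conditions force $\sigma$ to also be the identity. Otherwise, assuming $\sigma\neq\omega$, I aim to produce a descent $s_i$ of $\omega$ (an index $i$ with $\omega(i)>\omega(i+1)$) such that the Gale conditions continue to hold for $\sigma$ and $\omega'\deftobe\omega s_i$. Since $\omega'\lessdot\omega$, the inductive hypothesis then yields $\sigma\sbo\omega'$, and composing with the cover gives $\sigma\sbo\omega$. Right multiplication by $s_i$ leaves every Gale set $\{\omega(1),\ldots,\omega(k)\}$ unchanged except when $k=i$, where $\omega(i)$ is replaced by the smaller value $\omega(i+1)$; the task is to choose $i$ so that this weaker set still dominates $\{\sigma(1),\ldots,\sigma(i)\}$ in Gale order.

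The main obstacle is guaranteeing the existence of a suitable descent: a naive choice may break the Gale comparison at level $i$. To produce a good descent I would examine the largest level $k$ at which strict inequality occurs in the sorted Gale comparison between $\{\sigma(1),\ldots,\sigma(k)\}$ and $\{\omega(1),\ldots,\omega(k)\}$, and use the excess there to locate a descent whose induced replacement still leaves enough room above the $\sigma$-side. The classical lifting property of Bruhat order—if $\omega s < \omega$ for a simple reflection $s$, then $\sigma\sbo\omega$ forces $\sigma\sbo\omega s$ or $\sigma s\sbo\omega s$—provides the structural backbone for reconciling the choice of reflection with the Gale data. In case no simple descent works, I would fall back on a non-simple reflection $t=(i,j)$ producing a Bruhat cover $\omega t \lessdot \omega$ with $\omega(i)>\omega(j)$, and run an analogous argument; such a $t$ can be read off directly from a pair of positions witnessing the discrepancy between the two sides at the critical level $k$.
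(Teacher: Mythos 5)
The paper does not prove this statement; it is cited from Bj\"orner--Brenti \cite{BB05} and used as a black box. So there is no paper proof to compare against, and I will evaluate your argument on its own terms.

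Your forward direction is essentially sound. The key computation---that replacing an element of a $k$-set by a strictly larger one can only raise the sorted sequence entrywise---is correct, and it implies that Gale comparability is preserved along any transposition that increases length, cover or not. You do implicitly invoke the chain property of Bruhat order (that $\sigma \sbo \omega$ can be realized as a saturated chain of covers), a standard but nontrivial fact given that the paper defines $\sbo$ via reduced subwords, so the induction on $\ell(\omega)-\ell(\sigma)$ is leaning on theory whose proof is comparable in difficulty to the tableau criterion itself.

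Your backward direction has a genuine gap. The proposed induction---find a descent $i$ of $\omega$ so that the Gale conditions survive replacing $\omega$ by $\omega s_i$---cannot be carried out as stated, because no such descent need exist. Take $\omega = 2\,4\,1\,3$ and $\sigma = 1\,3\,2\,4$. One checks that $\{\sigma(1),\dots,\sigma(k)\} \gale \{\omega(1),\dots,\omega(k)\}$ for every $k$ (at $k=1$: $\{1\}\gale\{2\}$; at $k=2$: $\{1,3\}\gale\{2,4\}$; at $k=3$: $\{1,2,3\}\gale\{1,2,4\}$). The unique descent of $\omega$ is at $i=2$, giving $\omega s_2 = 2\,1\,4\,3$; but then at $k=2$ the comparison becomes $\{1,3\}$ versus $\{1,2\}$, and $3 \not\le 2$, so the Gale condition fails. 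You anticipate this possibility and propose falling back on a non-simple reflection $t$ with $\omega t \lessdot \omega$, and indeed in this example $\omega\cdot(1,3)=1\,4\,2\,3$ and $\omega\cdot(2,4)=2\,3\,1\,4$ both work. But the claim that a suitable $t$ ``can be read off directly from a pair of positions witnessing the discrepancy at the critical level'' is where the whole difficulty lives, and you give no argument that such a $t$ exists in general, nor how to locate it. Without that, the induction has no inductive step. The standard proofs in the literature avoid this by instead conjugating both $\sigma$ and $\omega$ by a simple reflection chosen from a descent of $\omega$ and doing a two-case analysis according to whether that position is an ascent or descent of $\sigma$---effectively a double induction compatible with the lifting property---rather than trying to fix $\sigma$ and descend $\omega$ alone. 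As written, your backward direction is an outline with its central lemma unstated and unproven.
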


\begin{remark}
Throughout this paper, we rely on Theorems~\ref{thm:inversion-set-for-weak-order} and \ref{thm:tableau-criterion} and avoid working directly with expressions and subexpressions. Nevertheless, the expression-based perspective clarifies why weak and strong Bruhat order are so closely related and why it is natural to discuss them together. Moreover, this viewpoint makes clear how both orders generalize to Bruhat orders of other geometric types: one simply replaces $\sym{n}$ with the appropriate Weyl group. One can give analogues of Theorems~\ref{thm:inversion-set-for-weak-order} and \ref{thm:tableau-criterion} for other types, which are just the combinatorial outcome of this natural generalization using Weyl groups.
  
\end{remark}

\subsection{Permutations and Tableau}

In this subsection, we review the Robinson--Schensted--Knuth correspondence and several of its consequences that will play a central role throughout this paper. In particular, we recall how RSK associates to each permutation in $\sym{n}$ a partition of $n$ encoding detailed combinatorial information, and we describe how this information interacts with probabilistic structures.

A \textit{partition} $\lambda$ of $n \in \mathbb{N}$, denoted $\lambda \vdash n$, is a finite sequence of positive integers
\[
\lambda_1 \ge \lambda_2 \ge \cdots \ge \lambda_k
\]
such that
\[
\sum_{i=1}^k \lambda_i = n.
\]
The integers $\lambda_i$ are called the \emph{parts} of the partition.

We will rely on the following bound on the number of partitions of $n$, due to Hardy--Ramanujan.

\begin{theorem}[\cite{HR18}]\label{thm:Hardy-Ramanujan}
    Let $p(n)=|\{\lambda\mid\lambda\vdash n\}|$ be the number of partitions of $n$. Then, 
\[p(n)=\exp\left(O(\sqrt{n})\right).\]
\end{theorem}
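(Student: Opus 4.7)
The plan is to leverage the Euler product form of the partition generating function,
\[
\sum_{n \geq 0} p(n) q^n \;=\; \prod_{k \geq 1} \frac{1}{1 - q^k}, \qquad 0 < q < 1.
\]
Since all coefficients are nonnegative, extracting a single term yields the uniform bound $p(n) q^n \leq \prod_{k \geq 1}(1-q^k)^{-1}$, i.e.
\[
\log p(n) \;\leq\; -n \log q \;-\; \sum_{k \geq 1} \log\bigl(1 - q^k\bigr),
\]
valid for every $q \in (0,1)$. The strategy is to show that the right-hand side is $O(\sqrt{n})$ under a judicious choice of $q$ depending on $n$.

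The core technical step is to prove the uniform estimate
\[
-\sum_{k \geq 1} \log(1 - q^k) \;=\; O\!\left( \tfrac{1}{1-q} \right) \qquad \text{as } q \to 1^-.
\]
Writing $q = e^{-t}$ with $t \to 0^+$, the claim becomes $\sum_{k \geq 1} -\log(1 - e^{-tk}) = O(1/t)$, and I would prove it by splitting the sum at $k_0 = \lfloor 1/t \rfloor$. In the small-$k$ regime $k \leq k_0$, the elementary inequality $1 - e^{-y} \geq y/2$ valid for $y \in [0,1]$ gives $-\log(1 - e^{-tk}) \leq \log 2 - \log(tk)$, and summing yields
\[
\sum_{k=1}^{k_0} \bigl( \log 2 - \log(tk) \bigr) \;=\; k_0 \log 2 - k_0 \log t - \log(k_0!),
\]
which is $O(1/t)$ after applying Stirling (Theorem~\ref{thm:Stirlingapproximation}) to $\log(k_0!) = k_0 \log k_0 - k_0 + O(\log k_0)$; the leading $k_0 \log(1/t)$ terms cancel. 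In the large-$k$ regime $k > k_0$, one has $e^{-tk} \leq e^{-1}$, so the bound $-\log(1-x) \leq x/(1-e^{-1})$ for $x \leq e^{-1}$ gives a geometric tail $\sum_{k > k_0} C\, e^{-tk} = O(1/(1 - e^{-t})) = O(1/t)$.

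Combining these estimates yields $\log p(n) \leq -n \log q + O(1/(1-q))$, and choosing $q = e^{-1/\sqrt{n}}$ makes both summands $O(\sqrt{n})$: the first because $-n \log q = \sqrt{n}$, and the second because $1 - q \asymp 1/\sqrt{n}$. This proves $\log p(n) = O(\sqrt{n})$ and hence $p(n) = \exp(O(\sqrt{n}))$. The only genuinely delicate ingredient is the split-and-sum estimate near the singularity at $q = 1$, where the logarithmic blow-up of the small-$k$ terms must be balanced against the geometric decay of the tail; this is the elementary core of the saddle-point argument in the classical Hardy--Ramanujan asymptotic, but only the order of magnitude is needed here.
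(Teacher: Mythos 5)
Your argument is correct, and it is genuinely different from what the paper does: the paper offers no proof at all, simply citing the full Hardy--Ramanujan asymptotic from \cite{HR18}, of which the stated bound $\log p(n)=O(\sqrt n)$ is a very weak corollary. Your route---extract one coefficient from Euler's product $\sum_n p(n)q^n=\prod_{k\ge1}(1-q^k)^{-1}$, bound $-\sum_k\log(1-q^k)=O(1/(1-q))$, and optimize at $q=e^{-1/\sqrt n}$---is the standard elementary proof of exactly this order-of-magnitude statement, and all your estimates check out: $1-e^{-y}\ge y/2$ on $[0,1]$ handles $k\le k_0=\lfloor 1/t\rfloor$, with the $k_0\log(1/t)$ terms cancelling against $\log(k_0!)$ up to $O(k_0)=O(1/t)$ (here the crude bound of Theorem~\ref{thm:crude-stirling}, $\log(k_0!)\ge k_0\log k_0-k_0$, already suffices, since you only need a lower bound on $\log(k_0!)$); and the tail $k>k_0$ is geometric since $-\log(1-x)\le x/(1-x)\le x/(1-e^{-1})$ for $x\le e^{-1}$. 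Note also that only the upper bound on $\log p(n)$ requires work, as $p(n)\ge1$ gives $\log p(n)\ge0$ for free, so your proof fully establishes the statement as used in Proposition~\ref{prop:Psi-to-prob}. If you want to streamline, the split at $k_0$ can be avoided entirely: expanding $-\log(1-q^k)=\sum_{m\ge1}q^{km}/m$ and swapping sums gives
\[
-\sum_{k\ge1}\log(1-q^k)=\sum_{m\ge1}\frac{1}{m}\,\frac{q^m}{1-q^m}\le\frac{q}{1-q}\sum_{m\ge1}\frac{1}{m^2}=\frac{\pi^2}{6}\,\frac{q}{1-q},
\]
using $1-q^m\ge m q^{m-1}(1-q)$, which yields the same $O(1/(1-q))$ bound in one line. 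The trade-off is that your version is more pedestrian but self-contained at the level of the paper's stated tools, whereas citing \cite{HR18}, as the paper does, imports far more than is needed.
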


To each partition $\lambda = (\lambda_1,\ldots,\lambda_k)$ we associate a \textit{Young diagram}, consisting of left-justified rows of $1 \times 1$ boxes, with $\lambda_j$ boxes in the $j$th row from the top. Given $\lambda\vdash n$, its \emph{conjugate} $\lambda'$ is the partition defined by
\[
\lambda'_j \coloneqq \#\{i:\ \lambda_i\ge j\}.
\]
Equivalently, the Young diagram of $\lambda'$ is the transpose of the Young diagram of $\lambda$. A \textit{standard Young tableau} of shape $\lambda$ is a filling of the boxes of the Young diagram of $\lambda$ with the integers $\{1,2,\dots,n\}$ such that the entries increase strictly along each row and down each column.

\begin{example}\label{ex:SYT}
Let $\lambda=(4,2,1)\vdash 7$, so $\conj{\lambda}=(3,2,1,1)$.  The Young diagram of $\conj{\lambda}$
and an example of a standard Young tableau of shape $\lambda$ are:
\[
\scalebox{0.85}{$\ydiagram{3,2,1,1}$}
\qquad\text{and}\qquad
\scalebox{0.85}{$
\begin{ytableau}
1 & 2 & 4 & 7 \\
3 & 5 \\
6
\end{ytableau}\,\,.
$}
\]
\end{example}

We write $\SYT(\lambda)$ for the set of standard Young tableaux of shape $\lambda$. The number of standard Young tableaux of a given shape $\lambda$ is well studied. We define some relevant quantities before giving the formula. For a box in the $i^\text{th}$ row (with the top row being row $1$) and $j^\text{th}$ column (with the leftmost column being column $1$) of $\lambda$, we define the hook length $\hook{\lambda}{i}{j} \deftobe \lambda_i-j+\conj{\lambda}_j-i+1$. This is the number of boxes to the right of $(i,j)$ in its row or below $(i,j)$ in its column, including $(i,j)$ itself. 

\begin{definition}\label{def:hook-length}
    The \textbf{hook length product} of a partition $\lambda$ is \[\hookproduct{\lambda}=\prod_{(i,j)\in\lambda}\hook{\lambda}{i}{j},\]
    with the product taken over all boxes of $\lambda$.    
\end{definition}

We record here a simple observation about the products of hook lengths.
\begin{lemma}\label{lem:row_col_prod} 
Let $\lambda \vdash n$. Then the hook length
\[
h_{\lambda}(i,j) \geq \min\{ \lambda_i-j+1, \lambda_j'-i+1\}.
\]
In particular, the product of hook lengths in the $i$th row is at least $(\lambda_i)!$ and the product of hook lengths in the $j$th column is at least $(\lambda_j')!.$
\end{lemma}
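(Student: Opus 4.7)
The plan is to unpack the definition $h_{\lambda}(i,j) = \lambda_i - j + \lambda'_j - i + 1$ directly. Since $(i,j)\in\lambda$ means $1\le j\le \lambda_i$ and $1\le i\le \lambda'_j$, both summands $\lambda_i - j$ and $\lambda'_j - i$ are nonnegative. I would then write $h_\lambda(i,j) = (\lambda_i - j + 1) + (\lambda'_j - i) \ge \lambda_i - j + 1$, and by the symmetric grouping $h_\lambda(i,j) \ge \lambda'_j - i + 1$. In particular $h_\lambda(i,j)$ dominates the minimum of the two, which is the displayed inequality. (The analogous maximum bound holds as well, but only the minimum is asserted and needed for the applications.)

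For the row product statement I would fix a row index $i$ and let $j$ range over $\{1,\dots,\lambda_i\}$. Applying the inequality above in the form $h_\lambda(i,j) \ge \lambda_i - j + 1$ and multiplying over $j$ gives
\[
\prod_{j=1}^{\lambda_i} h_\lambda(i,j) \;\ge\; \prod_{j=1}^{\lambda_i} (\lambda_i - j + 1) \;=\; \lambda_i!.
\]
The column product statement follows by the symmetric argument: fix $j$, let $i$ range over $\{1,\dots,\lambda'_j\}$, apply the bound $h_\lambda(i,j) \ge \lambda'_j - i + 1$, and multiply.

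There is no serious obstacle here; the content is purely bookkeeping from the hook length definition. The only point to verify is the nonnegativity of both the arm contribution $\lambda_i - j$ and the leg contribution $\lambda'_j - i$, which is immediate from the fact that $(i,j) \in \lambda$.
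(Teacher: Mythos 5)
Your proposal is correct and is exactly the intended argument: the paper states this lemma as an immediate observation without writing out a proof, and your unpacking of $h_{\lambda}(i,j)=\lambda_i-j+\lambda'_j-i+1$ together with the nonnegativity of the arm and leg contributions (which in fact gives the stronger bound $h_{\lambda}(i,j)\ge\max\{\lambda_i-j+1,\ \lambda'_j-i+1\}$) and the telescoping products over a row or column is precisely the bookkeeping the authors had in mind.
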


\begin{theorem}\label{thm:hook-length-formula}
    Let $\lambda$ be a partition of $n$ and let $f^\lambda$ be the number of standard Young tableaux of shape $\lambda$. Then, 

    \[f^\lambda=\frac{n!}{\hookproduct{\lambda}}.\]
\end{theorem}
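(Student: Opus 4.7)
The plan is to prove this by induction on $n = |\lambda|$, following the probabilistic hook-walk argument of Greene, Nijenhuis, and Wilf. The base case $n=1$ is immediate, as the one-box diagram has exactly one SYT and hook length product equal to $1$.

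For the inductive step, observe that the box containing $n$ in any $T \in \SYT(\lambda)$ must lie at a \emph{corner} of $\lambda$ (a cell with no box immediately to its right or below it), and removing that box yields a standard Young tableau of shape $\lambda \setminus c$. This gives the recurrence $f^\lambda = \sum_c f^{\lambda \setminus c}$, summed over corners $c$ of $\lambda$. Substituting the inductive hypothesis $f^{\lambda \setminus c} = (n-1)!/\hookproduct{\lambda \setminus c}$, the desired formula $f^\lambda = n!/\hookproduct{\lambda}$ reduces to the purely combinatorial identity
\[
\sum_{c \text{ corner of } \lambda} \frac{\hookproduct{\lambda}}{n \cdot \hookproduct{\lambda \setminus c}} \ =\ 1.
\]

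To establish this identity probabilistically, introduce the \emph{hook walk}: select a starting cell of $\lambda$ uniformly at random; from a cell $(i,j)$, move uniformly at random to one of the $\hook{\lambda}{i}{j} - 1$ other cells in its hook (strictly to the right in row $i$ or strictly below in column $j$). Since row and column indices are non-decreasing along the walk, the process terminates in finitely many steps at a corner. Because the ending distribution is automatically a probability distribution on corners, the identity above follows at once from the stronger pointwise claim
\[
\Prob(\text{walk ends at } c) \ =\ \frac{\hookproduct{\lambda}}{n \cdot \hookproduct{\lambda \setminus c}}
\]
for every corner $c$ of $\lambda$.

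The main obstacle, and the technical heart of the proof, is verifying this endpoint distribution. The standard route conditions on the \emph{projection} of the walk: the set $R$ of rows and the set $C$ of columns that it visits. For a corner $c = (a,b)$, one shows that the conditional probability of the walk ending at $c$ with projection $(R, C)$ factors as a product of simple contributions involving the hook lengths $\hook{\lambda}{r}{b}$ for $r \in R$ and $\hook{\lambda}{a}{s}$ for $s \in C$; summing over all admissible $(R, C)$ then telescopes to exactly the claimed ratio of hook length products. Once this hook-walk endpoint identity is established, the induction closes and the hook length formula follows.
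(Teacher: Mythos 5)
The paper does not actually prove Theorem~\ref{thm:hook-length-formula}; it is stated as a classical background result (the Frame--Robinson--Thrall hook length formula) and used without proof. So there is no argument in the paper to compare against. Your proposal is the Greene--Nijenhuis--Wilf probabilistic hook-walk proof, which is a standard and correct route to this theorem. The reduction to the corner-removal recurrence, the reformulation as a probability identity, and the introduction of the hook walk are all set up properly.

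The one place you should be careful is that the ``technical heart'' you identify --- the pointwise endpoint distribution $\Prob(\text{walk ends at } c) = \hookproduct{\lambda} / (n \cdot \hookproduct{\lambda \setminus c})$ --- is only gestured at. In the GNW argument this is a genuine computation: for a fixed corner $c=(a,b)$ and a fixed projection $(R,C)$ with $a \in R$, $b \in C$, one proves by an inner induction that the conditional probability of a walk with that exact projection is $\prod_{r \in R\setminus\{a\}} \frac{1}{\hook{\lambda}{r}{b}-1} \cdot \prod_{s \in C\setminus\{b\}} \frac{1}{\hook{\lambda}{a}{s}-1}$, and then the sum over all admissible $(R,C)$ factors into a product of binomials $\prod_{r<a}\bigl(1+\tfrac{1}{\hook{\lambda}{r}{b}-1}\bigr)\prod_{s<b}\bigl(1+\tfrac{1}{\hook{\lambda}{a}{s}-1}\bigr)$, which one then matches against the ratio of hook products by tracking which hooks change (and by exactly one) when the corner is removed. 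Saying ``it telescopes'' is not quite the right word --- it is a factorization followed by a hook-bookkeeping identity --- and both the factorization lemma and the final matching need to be written out for the proof to be complete. As an outline, though, your plan is sound and would close the induction.
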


Having established some basic properties of Young diagrams and tableaux, we now turn to the key result relating them to permutations. The Robinson--Schensted--Knuth correspondence associates to each permutation a pair of \textit{standard Young tableaux} of the same shape.

\begin{theorem}[RSK correspondence]\label{thm:RSK}
There is a bijection
\[
\RSK \colon \sym{n} \longrightarrow \bigsqcup_{\lambda \vdash n}
\SYT(\lambda) \times \SYT(\lambda),
\]
which sends a permutation $\omega \in \sym{n}$ to a pair $(P(\omega),Q(\omega))$ of standard Young tableaux of the same shape $\lambda(\omega)$.
\end{theorem}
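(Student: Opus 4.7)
The plan is to prove the theorem via the classical Schensted row-insertion algorithm, which gives an explicit construction of the bijection. First I would define, for any standard Young tableau $T$ and any positive integer $x$ not appearing in $T$, the \emph{row insertion} $T \leftarrow x$. The algorithm attempts to place $x$ in the first row: if $x$ exceeds all entries of that row, append a new box at the end; otherwise, replace the leftmost entry strictly greater than $x$ (the ``bumped'' entry) and recursively insert that entry into the next row. A short check confirms that bumping preserves the strict row- and column-increase property, so $T \leftarrow x$ is again a standard Young tableau, whose shape differs from that of $T$ by exactly one new outer corner.

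Next I would define the pair of tableaux associated to $\omega \in \sym{n}$ as follows. Set $P_0 = Q_0 = \emptyset$, and inductively let $P_i = P_{i-1} \leftarrow \omega(i)$. Define $Q_i$ to be obtained from $Q_{i-1}$ by writing $i$ in the new box added to the shape during the insertion at step $i$. Then $P(\omega) \coloneq P_n$ and $Q(\omega) \coloneq Q_n$ have the common shape $\lambda(\omega) \vdash n$. Since the new boxes are added in the order $1,2,\ldots,n$ and each addition produces a Young diagram, the entries of $Q(\omega)$ automatically increase along rows and columns, so $Q(\omega) \in \SYT(\lambda(\omega))$. Similarly $P(\omega) \in \SYT(\lambda(\omega))$.

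It remains to check that $\RSK \colon \omega \mapsto (P(\omega), Q(\omega))$ is a bijection onto $\bigsqcup_{\lambda \vdash n}\SYT(\lambda)\times\SYT(\lambda)$. The main obstacle, and the technical heart of the proof, is constructing an inverse. Given a pair $(P,Q)$ of standard Young tableaux of the same shape, I would locate the box containing the maximum entry $n$ in $Q$; the key observation is that this box must be an outer corner of the common shape, and it records the location at which the insertion chain terminated at step $n$. I would then define reverse row-insertion: starting from the entry in that box of $P$, move up one row at a time, in each row replacing the entry being carried with the largest entry strictly smaller than it (so as to undo the bump), until a row is reached from which the carried entry is expelled. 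That expelled value is $\omega(n)$, and removing the chosen box leaves $(P_{n-1}, Q_{n-1})$; iterate to recover $\omega$.

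Finally I would verify that reverse insertion is literally inverse to forward insertion at each step, which is a direct combinatorial check that the choice of which entry to bump in each row is forced, both going forward (leftmost strictly greater) and going backward (largest strictly smaller in the row above). This two-way determinism shows that the map $\omega \mapsto (P,Q)$ is injective and surjective. Since the domain and codomain are both finite, this completes the proof; the cardinality identity $n! = \sum_{\lambda \vdash n} (f^\lambda)^2$ falls out as an immediate consequence, providing a useful consistency check.
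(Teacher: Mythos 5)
The paper does not prove this statement at all: it is quoted as a classical background result (the RSK correspondence), with the remark that ``the combinatorial details of the construction are not relevant for what follows,'' so there is no internal proof to compare against. Your proposal is the standard Schensted row-insertion proof, and it is correct in outline: forward insertion building $(P_i,Q_i)$ step by step, the observation that each insertion adds exactly one outer corner (so the recording tableau $Q(\omega)$ is automatically standard), and the construction of the inverse by locating the box of $Q$ containing $n$ (necessarily an outer corner, since any box to its right or below would hold a larger entry) and running reverse bumping, replacing in each higher row the rightmost entry strictly smaller than the carried value. The only points you would need to flesh out in a complete write-up are routine but should be stated: that insertion and the intermediate tableaux $P_i$ make sense for arbitrary distinct entries (the $P_i$ contain $\omega(1),\dots,\omega(i)$, not $\{1,\dots,i\}$), that reverse insertion is a two-sided step inverse of forward insertion (your ``forced choice'' argument in each row), and that in the surjectivity direction the expelled values are exactly the entries of $P$, hence form a word that is a permutation of $[n]$ whose image under $\RSK$ is the given pair. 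With those checks spelled out, your argument establishes the bijection $\RSK\colon \sym{n}\to\bigsqcup_{\lambda\vdash n}\SYT(\lambda)\times\SYT(\lambda)$, together with the identity $n!=\sum_{\lambda\vdash n}(f^\lambda)^2$, exactly as in the classical literature the paper cites.
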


The combinatorial details of the construction are not relevant for what follows. What is relevant is the map taking $\omega$ to $\lambda(\omega)$. Notably, the shape $\lambda(\omega)$ reflects the structure of increasing subsequences of $\omega$, when $\omega$ is written in one-line notation.

\begin{theorem}[\cite{Sch61}]\label{thm:longest-inc-subsequence}
Let $\omega \in \sym{n}$, and let $\lambda(\omega) = (\lambda_1,\lambda_2,\dots)$ be the shape of its RSK tableaux. Then $\lambda_1$ is the length of the longest increasing subsequence of $\omega$.
\end{theorem}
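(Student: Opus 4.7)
The plan is to prove the theorem by induction on $n$ using the row-insertion (Schensted bumping) description of the RSK correspondence, which is the standard construction underlying Theorem~\ref{thm:RSK}. Recall that $P(\omega)$ is built by inserting $\omega(1), \omega(2), \ldots, \omega(n)$ into an initially empty tableau one at a time: to insert a value $x$ into a row, replace the leftmost entry greater than $x$ by $x$ (if one exists) and recursively insert the bumped entry into the next row, or append $x$ at the end of the row if no entry exceeds $x$. In particular, $\lambda_1$ equals the total number of insertions that ultimately place an entry in the first row, which is the number of insertions that do not bump any entry of the first row.

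The key step is to establish, by induction on $k$, the following invariant after the first $k$ insertions: for each column index $j$ that is occupied in the first row, the entry $P_{1,j}^{(k)}$ in position $(1,j)$ equals
\[
\min\bigl\{\omega(i) : 1 \le i \le k,\ \text{the longest increasing subsequence of } \omega(1)\cdots\omega(k) \text{ ending at position } i \text{ has length} \ge j\bigr\}.
\]
The base case $k = 0$ is vacuous. For the inductive step I would analyze insertion of $\omega(k{+}1)$ into the first row: it either (a) is appended in a new column $j^\star$, in which case no entry of the current row is less than $\omega(k{+}1)$, meaning every existing first-row entry $P_{1,i}^{(k)}$ with $i < j^\star$ is the endpoint of an increasing subsequence of length $i$ in the prefix, so concatenating any such one with $\omega(k{+}1)$ produces an increasing subsequence of length $j^\star$; or (b) it bumps the leftmost entry $P_{1,j}^{(k)} > \omega(k{+}1)$, in which case only position $j$ changes and the minimality-of-endpoint invariant is restored at column $j$ while the other columns are unaffected. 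In both cases the invariant is maintained; I would just need to verify carefully that the new value at column $j$ is indeed the new minimum over length-$j$ increasing subsequences of the prefix of length $k+1$.

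From the invariant, two consequences follow. First, if the first row has length $\ell$ after all $n$ insertions, the invariant guarantees that an increasing subsequence of length $\ell$ exists in $\omega$, namely one obtained by tracing back endpoints. Second, if some increasing subsequence $\omega(i_1) < \omega(i_2) < \cdots < \omega(i_m)$ exists in $\omega$, then after inserting $\omega(i_m)$ the first row must contain at least $m$ entries, because each $\omega(i_t)$ certifies that the length-$t$ minimum in the invariant is finite at the appropriate stage. Hence $\lambda_1$ is both $\ge$ and $\le$ the length of the longest increasing subsequence.

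I expect the main obstacle to be a careful bookkeeping argument for the bumping step: showing that when $\omega(k{+}1)$ displaces $P_{1,j}^{(k)}$, the new entry $\omega(k{+}1)$ is indeed the minimum endpoint of a length-$j$ increasing subsequence in $\omega(1)\cdots\omega(k{+}1)$, given that $P_{1,j-1}^{(k)} \le \omega(k{+}1) < P_{1,j}^{(k)}$. This requires combining the inductive hypothesis at column $j-1$ (to produce a length-$(j{-}1)$ subsequence ending at some $\omega(i) \le \omega(k{+}1)$, to which $\omega(k{+}1)$ can be appended) with the minimality statement at column $j$ (to rule out any smaller new endpoint). The rest of the argument is essentially formal once the invariant is in hand.
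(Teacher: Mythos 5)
Your plan is correct: it is the classical Schensted argument, maintaining the invariant that the $j$th entry of the first row is the minimal last value of a length-$j$ increasing subsequence of the inserted prefix, which simultaneously gives $\lambda_1\ge$ and $\lambda_1\le$ the longest-increasing-subsequence length. The paper does not prove this statement but cites it to \cite{Sch61}, and your proposal is essentially the proof given there, so there is nothing to add beyond carrying out the bumping-step bookkeeping you already identify (including the one-line check that columns other than the bumped one are unchanged).
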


Thus, the first row of the RSK shape records the size of the longest increasing pattern in the permutation. Greene's theorem extends this observation by relating the lengths of other rows of $\lambda(\omega)$ to increasing subsequences in $\omega$.

\begin{theorem}[Greene's Theorem~\cite{G74}]\label{thm:Greene}
Let $\omega \in \sym{n}$, and let $\lambda(\omega) = (\lambda_1,\lambda_2,\dots)$ be the shape of its RSK tableaux. For each $k \ge 1$, the sum
\[
\lambda_1 + \cdots + \lambda_k
\]
equals the maximum total length of a union of $k$ disjoint increasing subsequences of $\omega$.
\end{theorem}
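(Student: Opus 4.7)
Write $a_k(\omega)$ for the maximum total cardinality of a union of $k$ pairwise disjoint increasing subsequences of $\omega$; the goal is $a_k(\omega) = \lambda_1 + \cdots + \lambda_k$. The case $k=1$ is exactly Schensted's theorem (Theorem~\ref{thm:longest-inc-subsequence}), so I would prove the two inequalities separately using RSK bumping, inducting on $n = |\omega|$. Let $\omega' = \omega(1)\cdots\omega(n-1)$, let $\mu$ be its RSK shape, and let $r$ be the row of the box added to $\mu$ when $\omega(n)$ is inserted, so $\lambda_r = \mu_r + 1$ and $\lambda_i = \mu_i$ otherwise.

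For the upper bound $a_k(\omega) \leq \lambda_1 + \cdots + \lambda_k$, fix an optimal $k$-family $(I_1,\ldots,I_k)$ of disjoint increasing subsequences of $\omega$. If $\omega(n) \notin \bigcup_j I_j$, the family is already a $k$-family for $\omega'$ and induction gives $a_k(\omega) \leq \mu_1 + \cdots + \mu_k \leq \lambda_1 + \cdots + \lambda_k$. If $\omega(n) \in \bigcup_j I_j$, deleting it yields a $k$-family for $\omega'$ of size $a_k(\omega) - 1$; when $r \leq k$, the induction closes because $\lambda_1 + \cdots + \lambda_k = \mu_1 + \cdots + \mu_k + 1$. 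The delicate case is $r > k$: then $\lambda_1 + \cdots + \lambda_k = \mu_1 + \cdots + \mu_k$, so induction as stated is off by one, and one must exhibit an \emph{alternative} $k$-family of $\omega'$ of size $a_k(\omega)$ via an exchange along the bumping path of $\omega(n)$. The key observation to exploit is that being bumped past row $k$ forces $\omega(n)$ to be substitutable by an earlier letter lying on the same bumping column, producing an equally large $k$-family for $\omega'$ that avoids $\omega(n)$ altogether.

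For the lower bound $a_k(\omega) \geq \lambda_1 + \cdots + \lambda_k$, I would construct an explicit $k$-family directly from the RSK bumping history. Schensted's argument extracts a longest increasing subsequence from the letters whose insertion chain terminates in row $1$; the natural generalization to $k$ rows uses the fact that bumping paths move weakly leftward, so one can iteratively extract, for each row $i \leq k$, an increasing subsequence from the letters whose insertion chain passes through row $i$ and then remove these letters before processing row $i+1$. Making this extraction canonical and verifying that the resulting subsequences are pairwise disjoint with total length exactly $\lambda_1 + \cdots + \lambda_k$ is the content of the lower bound.

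The main obstacle is the exchange argument in the $r > k$ case of the upper bound, which is the step that genuinely goes beyond the $k=1$ template. A cleaner alternative that sidesteps the case analysis is Viennot's geometric shadow-line construction of RSK, in which both inequalities of Greene's theorem fall out simultaneously from a planar incidence argument relating the shadow lines of the permutation matrix to unions of $k$ monotone subsequences. I would pursue whichever presentation formalizes more gracefully; in either framework, the heart of the argument is the bijective correspondence between the first $k$ rows of $\lambda$ and a structured $k$-partition of the ``top'' letters of $\omega$ into increasing subsequences.
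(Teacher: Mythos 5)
First, a point of calibration: the paper does not prove this statement at all --- Theorem~\ref{thm:Greene} is imported verbatim from Greene~\cite{G74} --- so the relevant benchmark is the classical proof, which shows that both $\lambda_1+\cdots+\lambda_k$ and the maximal total size $a_k(\omega)$ of a union of $k$ disjoint increasing subsequences are invariant under elementary Knuth transformations, and then verifies the equality directly for reading words of tableaux (alternatively via growth diagrams or Viennot's skeleta). Your outline never engages with any such invariance mechanism, and the two steps you leave as assertions are exactly where the actual content of the theorem lives; as written, both are unjustified, and under their natural literal readings both fail.

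Concretely: for the upper bound, in the case $r>k$ you need $a_k(\omega)=a_k(\omega')$ whenever the bumping path of $\omega(n)$ exits row $k$, and your proposed justification (``$\omega(n)$ is substitutable by an earlier letter lying on the same bumping column'') conflates tableau cells with word positions. The current occupant of a cell of $P(\omega')$ need not occur in $\omega$ \emph{after} the penultimate element of the increasing subsequence that uses $\omega(n)$, so swapping it in does not produce a subsequence; making such an exchange precise is essentially the whole theorem, not a lemma. Even your description of the $k=1$ template is off: for $\omega=312$, the letters whose insertion chain terminates in row $1$ are $3$ and $2$, which do not form an increasing subsequence --- Schensted's extraction is a backward-linked chain through the cells of row $1$, not a class of letters determined by where their bumping chains go. For the lower bound, ``letters whose insertion chain passes through row $i$'' is not a workable class (for $i=1$ it is every letter; and letters bumped into row $i$ arrive there in bumping order, not word order, so increasing subsequences of the derived word need not be increasing subsequences of $\omega$). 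Moreover, any scheme that first extracts a longest increasing subsequence for row $1$ and then recurses on the remaining letters provably cannot reach $\lambda_1+\cdots+\lambda_k$: for $\omega=236145$, whose RSK shape is $(4,2)$, the unique longest increasing subsequence $2345$ leaves only the descent $6,1$, giving total $5<6$, whereas the non-greedy family $\{2,3,6\}\cup\{1,4,5\}$ attains $6$. This non-hereditary behaviour of Greene's invariants is precisely why the naive row-by-row generalization of Schensted's argument breaks, and why the standard proofs route through Knuth equivalence (or a genuinely worked-out Viennot/growth-diagram argument, which is more than a ``planar incidence'' remark). As it stands, your proposal is a plan whose crux --- the exchange lemma and a correct canonical disjoint extraction --- is missing.
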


Equivalently, the row lengths of $\lambda(\omega)$ encode the optimal way to decompose $\omega$ into increasing subsequences. 

\begin{example}\label{ex:Greene's-theorem}
Let $\omega=3\,2\,1\,6\,5\,4\,7\in\sym{7}$, which has $\lambda(\omega)=(3,2,2)$ under RSK.
We verify Greene's theorem for $\omega$ by exhibiting non-unique optimal unions of $k$ disjoint increasing subsequences:
\[
\begin{array}{rll}
k=1: & (1,6,7), & \text{total length }3=\lambda_1,\\[2pt]
k=2: & (1,6,7)\ \cup\ (2,5), & \text{total length }5=\lambda_1+\lambda_2,\\[2pt]
k=3: & (1,6,7)\ \cup\ (2,5)\ \cup\ (3,4), & \text{total length }7=\lambda_1+\lambda_2+\lambda_3.
\end{array}
\]
\end{example}

We rely heavily on the RSK shape $\lambda(\omega)$ of a permutation $\omega\in\sym{n}$. We next discuss the probability measure induced on partitions of $n$ when we choose a permutation uniformly at random from $\sym{n}$. The resulting measure is called the \textit{Plancherel measure}.

\begin{definition}\label{def:Plancherel}
For a partition $\lambda \vdash n$, the Plancherel measure is defined by
\[
\planch(\lambda) = \frac{(f^\lambda)^2}{n!}=\frac{n!}{\hookproduct{\lambda}^2},
\]
where the second equality follows from Theorem~\ref{thm:hook-length-formula}.
\end{definition}

Since the RSK correspondence is a bijection between $\sym{n}$ and pairs of standard Young tableaux of
the same shape, the RSK shape of a uniform random permutation is distributed according to Plancherel
measure (see, e.g., \cite{S99}):

\begin{proposition}\label{prop:Plancherel}
Consider a permutation $\sigma\in\sym{n}$ selected uniformly at random. Fix a Young diagram $\lambda^*$. Then, we have $\lambda(\sigma)=\lambda^*$ with probability $\planch(\lambda^*)$.  
\end{proposition}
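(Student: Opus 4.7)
The plan is to prove this as a direct counting consequence of the RSK bijection. By Theorem~\ref{thm:RSK}, the map $\RSK \colon \sym{n} \to \bigsqcup_{\lambda \vdash n} \SYT(\lambda) \times \SYT(\lambda)$ sending $\omega$ to $(P(\omega), Q(\omega))$ is a bijection, and $\lambda(\omega)$ is by definition the common shape of $P(\omega)$ and $Q(\omega)$. Therefore the fiber $\RSK^{-1}\bigl(\SYT(\lambda^*) \times \SYT(\lambda^*)\bigr)$ is exactly the set of $\omega \in \sym{n}$ with $\lambda(\omega) = \lambda^*$.

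First I would count this fiber: since $\RSK$ is a bijection, the number of $\omega \in \sym{n}$ with $\lambda(\omega) = \lambda^*$ equals $|\SYT(\lambda^*) \times \SYT(\lambda^*)| = (f^{\lambda^*})^2$. Dividing by $|\sym{n}| = n!$ gives
\[
\Prob(\lambda(\sigma) = \lambda^*) \ =\ \frac{(f^{\lambda^*})^2}{n!} \ =\ \planch(\lambda^*),
\]
where the last equality is Definition~\ref{def:Plancherel}. (As a sanity check, summing over $\lambda^* \vdash n$ recovers the bijective identity $\sum_{\lambda \vdash n} (f^\lambda)^2 = n!$, so this indeed defines a probability measure on partitions of $n$.)

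There is really no obstacle here, and no inductive or analytic input is required beyond the fact that $\RSK$ is a bijection (already stated) and the definition of Plancherel measure (already given). The proof is essentially a one-line application of the fiber-counting principle, so the main task is simply to record the computation cleanly and cite Theorem~\ref{thm:RSK} and Definition~\ref{def:Plancherel}.
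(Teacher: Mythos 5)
Your proposal is correct and matches the paper's approach: the paper justifies this proposition with exactly the same one-line fiber-counting argument (RSK is a bijection, so the preimage of $\SYT(\lambda^*)\times\SYT(\lambda^*)$ has size $(f^{\lambda^*})^2$, then divide by $n!$), citing a standard reference rather than writing out the computation. Your write-up simply records that computation explicitly, which is fine.
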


The Plancherel measure will be a central tool for determining bounds on the number of intervals in weak Bruhat order. Further, since $\planch(\lambda)=\frac{n!}{\hookprod(\lambda)^2}\le 1$ for every $\lambda\vdash n$, we immediately obtain a uniform lower bound on $\hookprod(\lambda)$.

\begin{lemma}\label{lem:hook-uniform}
For all $n\in\mathbb N$ and all $\lambda\vdash n$,
\[
2\log \hookprod(\lambda)\ \ge\ n\log n - n.
\]
\end{lemma}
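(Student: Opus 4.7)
The plan is to assemble two facts already in place: the probabilistic interpretation of the hook-length product, and the crude Stirling lower bound. Concretely, from the stated definition of the Plancherel measure,
\[
\planch(\lambda)\ =\ \frac{n!}{\hookprod(\lambda)^2},
\]
and from the fact (Proposition~\ref{prop:Plancherel}, via the RSK bijection) that $\planch$ is a probability measure on partitions of $n$, every atom satisfies $\planch(\lambda)\le 1$. Rearranging gives $\hookprod(\lambda)^2\ \ge\ n!$, and taking logarithms yields $2\log \hookprod(\lambda)\ \ge\ \log(n!)$.

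At this point I would invoke the crude Stirling inequality recorded as Theorem~\ref{thm:crude-stirling}, which states that $\log(n!)\ \ge\ n\log n - n$ for every $n\in\mathbb{N}$ (including the boundary case $n=0$, where both sides equal $0$ under our convention $0\log 0=0$). Chaining the two inequalities produces the desired bound
\[
2\log \hookprod(\lambda)\ \ge\ \log(n!)\ \ge\ n\log n - n.
\]
There is no genuine obstacle here; the lemma is really a repackaging of Definition~\ref{def:Plancherel} and Theorem~\ref{thm:crude-stirling}, designed to be quoted in later arguments where one needs a deterministic (not merely average-case) lower bound on $\hookprod(\lambda)$ that is uniform across $\lambda\vdash n$. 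The only thing worth double-checking is that Theorem~\ref{thm:crude-stirling} is indeed stated for all $n\ge 0$ (so that the result covers $n=0$ and $n=1$, where $\hookprod(\lambda)=1$ and the right-hand side is $\le 0$), which the excerpt confirms.
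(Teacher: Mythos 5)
Your proposal is correct and follows exactly the paper's own argument: the bound $\planch(\lambda)=n!/\hookprod(\lambda)^2\le 1$ gives $2\log\hookprod(\lambda)\ge\log(n!)$, and the crude Stirling bound of Theorem~\ref{thm:crude-stirling} finishes the proof. Nothing is missing.
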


\begin{proof}
For each $\lambda\vdash n$,
\[
\planch(\lambda)
=\frac{(f^\lambda)^2}{n!}
=\frac{n!}{\hookprod(\lambda)^2}\leq 1
\]
Using the crude Stirling lower bound $\log(n!)\ge n\log n-n$, we conclude
\[
2\log\hookprod(\lambda)\ \ge\ \log(n!) \ge  n\log n-n,
\]
so the claim holds.
\end{proof}

We can also give refined probabilistic descriptions of the length of the longest increasing subsequence of $\omega\in\sym{n}$ or, equivalently by Theorem~\ref{thm:longest-inc-subsequence}, of $\lambda(\omega)_1$. 

We will use the following probability bound on the first row of the RSK shape of a uniform random permutation, which follows immediately from the Baik--Deift--Johansson asymptotics for the longest increasing subsequence.

\begin{lemma}\label{lem:LIS_3sqrtn}
Let $n\in\mathbb N$, and let $\omega$ be chosen uniformly at random from $\sym{n}$.  Let $\lambda(\omega)$ denote the RSK shape of $\omega$. Then
\[
\Prob(\lambda(\omega)_1\le 3\sqrt{n})\;=\;1-o(1).
\]
\end{lemma}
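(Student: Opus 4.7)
The plan is to translate the statement about the first row of the RSK shape into a statement about the length of the longest increasing subsequence of $\omega$ and then invoke a suitable large-deviation bound. By Theorem~\ref{thm:longest-inc-subsequence}, $\lambda(\omega)_1 = L_n(\omega)$, where $L_n(\omega)$ denotes the length of the longest increasing subsequence of $\omega$. Hence it suffices to establish
\[
\Prob\bigl(L_n(\omega) \le 3\sqrt{n}\bigr) \;=\; 1-o(1).
\]

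The cleanest route is to quote the Baik--Deift--Johansson asymptotics~\cite{BDJ99}, which assert that $(L_n - 2\sqrt{n})/n^{1/6}$ converges in distribution to the Tracy--Widom $F_2$ law. In particular, $L_n/\sqrt{n} \to 2$ in probability, so $\Prob(L_n \le 3\sqrt{n}) = \Prob(L_n/\sqrt{n} \le 3) \to 1$. This gives the lemma directly, and is what the excerpt alludes to when it says ``follows immediately.''

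If one prefers a self-contained proof avoiding the deep BDJ machinery, one can instead use a first-moment/union bound. For any $k$, the probability that a fixed $k$-subset of positions carries the values of $\omega$ in increasing order is $1/k!$, so
\[
\Prob(L_n \ge k) \;\le\; \binom{n}{k}\frac{1}{k!}.
\]
Setting $k = \lceil 3\sqrt{n}\rceil$ and applying Stirling's approximation (Theorem~\ref{thm:Stirlingapproximation}) yields $\binom{n}{k}/k! \le (en/k^2)^k/\sqrt{2\pi k} \cdot (1+o(1))$, and since $en/k^2 \to e/9 < 1$, this bound tends to $0$. Either argument suffices.

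There is no real obstacle here; the only subtlety is choosing how much heavy machinery to invoke. Since the paper already depends on BDJ elsewhere (notably in the proof of the lower bound in Theorem~\ref{thm:main_weak}), the cleanest presentation is simply to cite~\cite{BDJ99}.
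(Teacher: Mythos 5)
Your proposal is correct, and your primary route is essentially the paper's: identify $\lambda(\omega)_1$ with the longest increasing subsequence via Theorem~\ref{thm:longest-inc-subsequence} and cite \cite{BDJ99}. The difference is in how much of \cite{BDJ99} is used: the paper invokes the explicit upper-tail estimate (their Equation~(1.8)) with $t=n^{1/3}$ to get a quantitative bound $\Prob(\lambda(\omega)_1>3\sqrt n)\le C e^{-cn^{1/5}}$, whereas you observe that distributional convergence alone (hence tightness, i.e.\ $L_n=2\sqrt n+O_P(n^{1/6})$) already gives the qualitative $1-o(1)$ statement — a legitimate simplification since nothing more is needed here. Your second, elementary route via the first-moment bound $\Prob(L_n\ge k)\le\binom{n}{k}/k!$ is genuinely different and is worth noting: it avoids \cite{BDJ99} entirely (and this lemma is in fact the paper's only use of that result), and it works precisely because $e<3$, giving $L_n\le(e+\varepsilon)\sqrt n$ with high probability. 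One small slip there: from $\binom{n}{k}/k!\le n^k/(k!)^2$ and Stirling one gets a bound of the form $\bigl(e^2 n/k^2\bigr)^k/(2\pi k)$, not $\bigl(en/k^2\bigr)^k/\sqrt{2\pi k}$ as you wrote; with $k=\lceil 3\sqrt n\rceil$ the relevant ratio tends to $e^2/9\approx 0.82<1$ rather than $e/9$, so the conclusion is unaffected, but the constant should be corrected if you include that argument.
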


\begin{proof}
By Theorem~\ref{thm:longest-inc-subsequence}, $\lambda(\omega)_1$ is the length of the longest increasing subsequence in $\omega$.
Accordingly, by \cite[Theorem~1.1]{BDJ99}, the random variable
\[
X_n \;:=\; \frac{\lambda(\omega)_1-2\sqrt{n}}{n^{1/6}}
\]
converges in distribution (as $n\to\infty$) to the Tracy--Widom GUE law.

Moreover, writing $F_n(t):=\Prob(X_n\le t)$ for the distribution function of $X_n$, \cite[Equation (1.8)]{BDJ99} gives an explicit upper-tail bound: for $M>0$ sufficiently large there exist constants $c>0$ and $C(M)>0$ such that
\[
1-F_n(t)=\Prob(X_n>t)\ \le\ C(M)e^{-c t^{3/5}}
\]
for all $t$ in the range $M\le t\le n^{5/6}-2n^{1/3}$.

Taking $t:=n^{1/3}$, we have $t\to\infty$ and also $t\le n^{5/6}-2n^{1/3}$ for all sufficiently large $n$, so the bound applies and yields
\[
\Prob(X_n>n^{1/3})\ \le\ C(M)e^{-c n^{1/5}} \ =\ o(1).
\]
Finally, $X_n>n^{1/3}$ if and only if $\lambda(\omega)_1>2\sqrt{n}+n^{1/6}\cdot n^{1/3}=3\sqrt{n}$, so $\Prob(\lambda(\omega)_1>3\sqrt{n})=o(1)$, equivalently $\Prob(\lambda(\omega)_1\le 3\sqrt{n})=1-o(1)$.
\end{proof}

\subsection{Poset-theoretic results}

In this section, we recall some useful results about chains and antichains in finite posets, and apply them to the permutation poset associated to a given permutation $\sigma\in\sym{n}$. The goal is to show that the weak Bruhat order is closely related to \textit{linear extensions} of posets.

\begin{definition}\label{def:linear-extension}
A \textbf{linear extension} of a finite poset $(P,\prec)$ is a total order $\prec'$ on $P$ such that for all $p_1,p_2\in P$,
\[
p_1\prec p_2\ \Longrightarrow\ p_1\prec' p_2.
\]
We write $e(P)$ for the number of linear extensions of $P$.
\end{definition}

\begin{example}\label{ex:poset-extension}
Let $(P,\prec)$ be the poset on $\{a,b,c,d\}$ with relations $a\prec b$ and $a\prec c$ (and no others).
Define a total order $\prec'$ on the same ground set by declaring
\[
a\prec' b,\qquad b\prec' c,\qquad c\prec' d.
\]
Then $\prec'$ extends $\prec$ (since $a\prec' b$ and, by transitive closure, $a\prec' c$), and hence $(P,\prec')$ is a linear extension of $(P,\prec)$.
\end{example}

\begin{definition}\label{def:chains-and-antichains}
Let $(P,\prec)$ be a poset. A \emph{chain} is a subset $C\subseteq P$ such that $\prec$ restricts to a total order on~$C$, \emph{i.e.}, for all distinct $c_1,c_2\in C$ we have either $c_1\prec c_2$ or $c_2\prec c_1$. 
An \emph{antichain} is a subset $C\subseteq P$ such that no two distinct elements are comparable, \emph{i.e.}, for all distinct $c_1,c_2\in C$ we have neither $c_1\prec c_2$ nor $c_2\prec c_1$.
\end{definition}

For a finite poset $P$ on $n$ elements and $k\ge 1$, let
\[
A_k(P)\ :=\ \max\{|U|:\ U\subseteq P\ \text{can be covered by $k$ antichains}\},
\]
and
\[
C_k(P)\ :=\ \max\{|U|:\ U\subseteq P\ \text{can be covered by $k$ chains}\}.
\]
Set $A_0(P)=C_0(P)=0$ and define the Greene--Kleitman--Fomin parameters by
\[
a_k(P)\ :=\ A_k(P)-A_{k-1}(P),
\qquad
c_k(P)\ :=\ C_k(P)-C_{k-1}(P)
\qquad (k\ge 1).
\]

It immediately follows that

\[
\sum_{k\ge 1} a_k(P)=\sum_{k\ge 1} c_k(P)=n.
\]

\begin{proposition}[\cite{Greene76}]
Let $P$ be a finite poset. For $k\ge 1$, one has $a_{k+1}(P)\le a_k(P)$ and $c_{k+1}(P)\le c_k(P)$. In particular, $(a_k(P))_{k\ge 1}$ and $(c_k(P))_{k\ge 1}$ are partitions of $n$.
\end{proposition}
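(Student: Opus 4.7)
The plan is to establish three facts in order: non-negativity of $a_k$ and $c_k$; the telescoping identity $\sum_{k\ge 1} a_k = \sum_{k\ge 1} c_k = n$; and the key monotonicity $a_{k+1}\le a_k$ (respectively $c_{k+1}\le c_k$). The first two are essentially formalities. For non-negativity, any union of $k-1$ antichains is also a union of $k$ antichains via appending an empty antichain, so $A_{k-1}(P)\le A_k(P)$; the same works for chains. For the sum identity, $A_k(P)=n$ whenever $k\ge n$ (decompose into singleton antichains), so $\sum_{k\ge 1}(A_k - A_{k-1})$ telescopes to $\lim_{k\to\infty}A_k(P)=n$, and chains are handled identically.

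The real content is the concavity inequality $A_{k+1}(P)+A_{k-1}(P)\le 2\,A_k(P)$ and its chain analogue, which I would reformulate as the existence of a \emph{normal antichain decomposition} in the sense of Greene--Kleitman: a disjoint decomposition $P=E_1\sqcup E_2\sqcup\cdots\sqcup E_r$ into antichains with $|E_1|\ge |E_2|\ge\cdots\ge |E_r|$ such that for every $k\ge 1$ the union $E_1\cup\cdots\cup E_k$ realizes $A_k(P)$. Granted such a decomposition, $a_k=|E_k|$ is non-increasing in $k$ by construction, which yields the monotonicity. The dual statement for chains follows by running the same argument with chain decompositions in place of antichain decompositions.

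The hard part, and where I expect the main technical difficulty, is producing a normal decomposition. I would proceed by induction on $|P|$: use Dilworth's theorem to extract a maximum antichain $E_1$ that participates simultaneously in optimal covers for all $k$, then apply the inductive hypothesis to $P\setminus E_1$. The delicate step is choosing $E_1$ so that $A_k(P\setminus E_1)=A_k(P)-|E_1|$ for every $k\ge 1$; this is the classical exchange argument of Greene--Kleitman, in which one takes optimal antichain systems realizing $A_{k+1}(P)$ and $A_{k-1}(P)$ and performs an alternating-path rearrangement on the bipartite structure they define to produce two $k$-antichain systems whose combined size is $A_{k+1}(P)+A_{k-1}(P)$; the latter must be at most $2A_k(P)$ since each system is admissible for $A_k(P)$. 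Once normality is granted, both the monotonicity $a_{k+1}\le a_k$ and the conclusion that $(a_k)_{k\ge 1}$ and $(c_k)_{k\ge 1}$ are partitions of $n$ follow immediately.
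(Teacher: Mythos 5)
This proposition is not proved in the paper at all: it is quoted from Greene's 1976 work (the Greene--Kleitman--Fomin theory) and used as a black box, so there is no internal argument for your attempt to match. Your preliminary steps are fine: non-negativity of $a_k,c_k$ (pad with an empty antichain/chain) and the telescoping $\sum_k a_k=\sum_k c_k=n$ are exactly as easy as you say, and you are also right that the entire content is the concavity $A_{k+1}(P)+A_{k-1}(P)\le 2A_k(P)$ (equivalently $a_{k+1}\le a_k$) and its chain analogue.

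However, as a proof your proposal has a genuine gap, and the one precise claim you make in the delicate step is wrong. You ask for a maximum antichain $E_1$ with $A_k(P\setminus E_1)=A_k(P)-|E_1|$ for \emph{every} $k\ge 1$; taking $k=1$ this reads $A_1(P\setminus E_1)=A_1(P)-|E_1|=0$, which forces $P\setminus E_1=\emptyset$, so such an $E_1$ exists only when $P$ is itself an antichain. The bookkeeping you actually need for the induction producing a normal decomposition $E_1\sqcup E_2\sqcup\cdots$ is the shifted condition $A_k(P\setminus E_1)=A_{k+1}(P)-|E_1|$ for all $k\ge 0$, and proving that a maximum antichain with this simultaneous compatibility exists is precisely the nontrivial part of Greene--Kleitman theory (saturation/normality of families); Dilworth's theorem alone gives you a maximum antichain but says nothing about compatibility with the optimal $k$-families for all $k$ at once. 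Likewise, the ``alternating-path rearrangement'' that is supposed to turn optimal $(k+1)$- and $(k-1)$-families into two admissible $k$-families is asserted, not carried out, and it is exactly where the work lies. If you want a short honest route, either cite Greene (1976) as the paper does, or derive concavity from Greene's duality formula $A_k(P)=\min_{\mathcal C}\sum_{C\in\mathcal C}\min\{|C|,k\}$ over chain partitions $\mathcal C$ (a minimum of concave functions of $k$ is concave), with the dual formula for $C_k(P)$ over antichain partitions; but that duality theorem is itself the hard result, so it too must be cited rather than sketched.
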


While the above results are illuminating in their own right, Greene--Kleitman--Fomin parameters also find widespread application. In this paper, we will use them to obtain a two-sided bound on the number of linear extensions of a given poset, a statistic whose computational complexity is $\#P$-complete. Bochkov and Petrov proved the following:

\begin{theorem}[{\cite{BP21}}]
\label{thm:BP}
Let $P$ be a finite poset on $n$ elements, with antichain and chain Greene--Kleitman--Fomin parameters $a_i(P)$ and $c_j(P)$, respectively.  Then
\[
\prod_i a_i(P)!\ \le\ e(P)\ \le\ \frac{n!}{\prod_j c_j(P)!}.
\]
\end{theorem}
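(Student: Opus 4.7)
The plan is to prove each inequality by a dual decomposition argument: chains for the upper bound, antichains for the lower.

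For the upper bound, I fix a chain decomposition $P = D_1 \sqcup \cdots \sqcup D_m$ with $|D_j| = c_j(P)$, whose existence is guaranteed by the Greene--Kleitman theorem. For any linear extension $\pi$, record the ``chain pattern'' word $w(\pi) \in \{1,\dots,m\}^n$ whose $k$-th letter is the index of the chain containing $\pi(k)$. Since each $D_j$ is a chain, its elements appear in $\pi$ in their forced chain order, so $\pi$ is recoverable from $w(\pi)$; thus $\pi \mapsto w(\pi)$ is an injection into the set of multiset-words with $c_j$ copies of letter $j$, of size $\binom{n}{c_1,\dots,c_m} = n!/\prod_j c_j!$, giving the upper bound.

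For the lower bound, I aim to exhibit an antichain decomposition $P = B_1 \sqcup \cdots \sqcup B_r$ that is \emph{order-preserving} --- meaning $p \prec q$ with $p \in B_i$ and $q \in B_j$ forces $i < j$ --- and whose multiset of sizes equals $\{a_1, a_2, \dots\}$. Granted this, any independent choice of internal orderings of the $B_i$'s, concatenated in the sequence $B_1, \dots, B_r$, yields a valid linear extension (by order-preservingness), and distinct choices yield distinct extensions, giving $e(P) \ge \prod_i |B_i|! = \prod_j a_j!$.

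The main obstacle is producing such an order-preserving decomposition with exactly the Greene multiset of sizes. The Mirsky level decomposition is always order-preserving but generally has more balanced sizes than $(a_j)$: for the poset with relations $a \prec b$, $a \prec c$ and isolated $d$, Mirsky yields sizes $(2,2)$ whereas $(a_1, a_2) = (3, 1)$, and one instead must use the decomposition $\{a\}, \{b, c, d\}$ of sizes $(1, 3)$. To realize the Greene multiset in general, I plan to invoke the full Greene--Kleitman theorem, which supplies a chain decomposition of sizes $(c_j)$ together with a compatible antichain decomposition of sizes $(a_j)$; the compatibility should force order-preservingness when the antichains are indexed in a suitable (not necessarily size-decreasing) order. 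An alternative route is by induction on $n$: at each step, peel off an antichain $B$ of size $a_k$ chosen so that the Greene parameters of $P \setminus B$ are exactly $(a_i)_{i\ne k}$, and recurse. Verifying such a ``peelable'' $B$ always exists is the combinatorial heart of this approach.
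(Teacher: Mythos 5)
Note first that the paper does not prove this statement at all: it is imported from \cite{BP21} and used as a black box, so there is no internal proof to compare yours against; your attempt has to stand on its own. Its mechanics are fine (the chain-pattern injection, and the concatenation of internally ordered antichain blocks), but both halves hinge on realization statements that are exactly where the content of the theorem lies, and neither is established.

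For the upper bound, your injection correctly shows $e(P)\le n!/\prod_j d_j!$ for the part sizes $d_j$ of \emph{any} chain partition, but the step ``fix a chain decomposition with $|D_j|=c_j(P)$, guaranteed by Greene--Kleitman'' is unjustified and in fact false. Greene's theorem gives, for each $k$ separately, a family of $k$ chains covering $C_k(P)$ elements, plus the conjugacy of the two parameter partitions; it does not give a single partition realizing all the $c_j$ simultaneously (a completely saturated partition), and such a partition need not exist. Concretely, take two $3$-chains $u_1<u_2<u_3$ and $v_1<v_2<v_3$ with the one extra relation $u_2<v_2$: then $c_1=4$ and $c_2=2$, but the unique $4$-element chain is $u_1<u_2<v_2<v_3$, whose complement $\{u_3,v_1\}$ is an antichain, so no chain partition has sizes $(4,2)$. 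The gap is not cosmetic: the sorted sizes of any chain partition are dominated by $(c_j)$, and since $x\mapsto\log(x!)$ is convex this forces $\prod_j d_j!\le \prod_j c_j!$, so the bound your injection actually delivers (here $720/(3!\,3!)=20$) is strictly weaker than the claimed $n!/\prod_j c_j!$ (here $15$; one checks $e(P)=10$). The lower bound is incomplete by your own account: the concatenation argument works \emph{granted} an order-preserving antichain partition whose size multiset is $(a_i(P))$, but producing it is, as you say, the combinatorial heart, and neither proposed route is carried out --- in particular the ``full Greene--Kleitman theorem'' does not supply a chain decomposition of sizes $(c_j)$ compatible with an antichain decomposition of sizes $(a_j)$ (the example above already kills the chain half), while the easy substitute, Mirsky's level decomposition, has sizes dominated by $(a_i)$ and so only yields the weaker inequality $\prod_i m_i!\le \prod_i a_i!\,$. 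As written, then, neither inequality is proved; both reduce to nontrivial existence/realization facts that need their own argument, which is what \cite{BP21} supplies.
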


We give another result relating chains with antichains, due to Mirsky. This is the dual statement to the well-know Dilworth's theorem.

\begin{theorem}[Mirsky's Theorem~\cite{M71}]\label{thm:Mirsky-theorem}
Let $P$ be a finite poset. The minimum number of \emph{antichains} needed to partition $P$ is equal
to the size of a largest \emph{chain} in $P$ (namely, the height of $P$).
\end{theorem}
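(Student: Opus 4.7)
The plan is to prove the two inequalities separately, using a standard ``rank by longest chain below'' construction. Let $h$ denote the size of a largest chain in $P$, and let $m$ denote the minimum number of antichains needed to partition $P$.

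First I would show $m \ge h$. Any chain of size $h$ consists of pairwise comparable elements, so no two of its elements can lie in a common antichain. Hence any antichain partition of $P$ must use at least $h$ blocks, giving $m \ge h$.

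Next I would show $m \le h$ by exhibiting an explicit antichain partition with $h$ parts. For each $x \in P$, let $r(x)$ be the size of a largest chain in $P$ with maximum element $x$. Since every such chain has size between $1$ and $h$, the function $r$ takes values in $\{1,\ldots,h\}$, so the level sets
\[
A_i\ \deftobe\ \{x\in P : r(x)=i\},\qquad 1\le i\le h,
\]
partition $P$ into at most $h$ (possibly empty) blocks. The key step is verifying that each $A_i$ is an antichain. Suppose, for contradiction, that $x,y\in A_i$ with $x\prec y$. By definition of $r(x)$, there is a chain $x_1 \prec x_2 \prec \cdots \prec x_i = x$ in $P$. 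Appending $y$ yields a chain $x_1 \prec \cdots \prec x_i \prec y$ of size $i+1$ with maximum element $y$, contradicting $r(y)=i$. Hence $A_i$ is an antichain, and $\{A_1,\ldots,A_h\}$ is an antichain partition of $P$ into at most $h$ blocks, so $m \le h$.

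Combining the two inequalities gives $m = h$, as claimed. There is no real obstacle in this argument; the only subtlety is choosing the correct rank function (largest chain ending at $x$, as opposed to starting at $x$ or passing through $x$), which is exactly what ensures that an upward comparability $x \prec y$ forces $r(y) > r(x)$ and thus guarantees each level set is an antichain.
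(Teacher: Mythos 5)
Your proof is correct and complete. The paper does not prove Mirsky's theorem at all---it is quoted as a known background result with a citation to \cite{M71}---so there is no in-paper argument to compare against; your argument is the standard (essentially Mirsky's original) one: the lower bound because a maximum chain meets each antichain in at most one element, and the upper bound via the rank function $r(x)=$ size of a longest chain with maximum element $x$, whose level sets are antichains precisely because $x\prec y$ forces $r(y)\ge r(x)+1$. Both steps are stated accurately and the level-set verification is the right key point, so nothing is missing.
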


\subsection{The permutation poset}\label{subsec:permutation_poset}

We now introduce the permutation poset and explain how it reduces weak-order comparability to estimating the number of linear extensions.

\begin{definition}\label{def:permutation-poset}
Let $\sigma\in\sym{n}$. The \emph{permutation poset} $\permposet{\sigma}$ is the poset on ground set
$[n]:=\{1,\dots,n\}$ with order relation
\[
i\prec j
\quad\Longleftrightarrow\quad
i<j\ \text{ and }\ \sigma(i)<\sigma(j).
\]
\end{definition}

For a permutation $\omega\in\sym{n}$, $\permposet{\omega}$ is a
linear extension of $\permposet{\sigma}$ if and only if
\[
\Inv(\omega)\subseteq \Inv(\sigma),
\]
equivalently $\omega^{-1}\wbo \sigma^{-1}$ by Theorem~\ref{thm:inversion-set-for-weak-order}. This equivalence is standard and admits a short direct verification from the definitions; see
Bj\"orner--Wachs~\cite[Proposition~4.1 and Section~6]{BW91} for a general weak-order-to-linear-extension framework. Taking inverses, we obtain
\[
\sigma_1 \wbo \sigma_2
\quad\Longleftrightarrow\quad
\Inv(\sigma_1^{-1})\subseteq \Inv(\sigma_2^{-1})
\quad\Longleftrightarrow\quad
\sigma_1^{-1}\ \text{is a linear extension of}\ \permposet{\sigma_2^{-1}}.
\]
Consequently, for every fixed $\sigma_2\in\sym{n}$,
\[
\#\{\sigma_1\in\sym{n}:\ \sigma_1\wbo\sigma_2\}
\ =\
e\!\big(\permposet{\sigma_2^{-1}}\big),
\qquad
\Prob(\sigma_1\wbo\sigma_2\mid \sigma_2)
\ =\ \frac{e(\permposet{\sigma_2^{-1}})}{n!}.
\]
Averaging over uniform $\sigma_2$ and reindexing the sum by inversion (a bijection of $\sym{n}$) yields the
basic reduction that weak-order comparability is governed by the typical size of
$e(\permposet{\sigma})$:
\begin{equation}\label{eq:weak_prob_extensions}
\Prob(\sigma_1\wbo\sigma_2)
\ =\
\frac{1}{(n!)^2}\sum_{\sigma\in\sym{n}} e\!\big(\permposet{\sigma}\big).
\end{equation}

\section{The weak order}\label{sec:weak}
In this section we prove Theorem~\ref{thm:main_weak} by matching lower and upper bounds for
$\Prob(\sigma_1\wbo\sigma_2)$ via the extension-count formula \eqref{eq:weak_prob_extensions}.
For the lower bound, we restrict to permutations with typical longest increasing subsequence and apply
the Bochkov--Petrov lower inequality. For the upper bound, we condition on RSK shape, apply the
Bochkov--Petrov upper inequality in terms of Greene--Kleitman--Fomin parameters, and reduce the problem
to a deterministic uniform lower bound on an exponent functional $\Psi(\lambda)$, proved by a
row-peeling reduction to a fixed-width strip and a direct strip analysis.

\subsection{Lower bound via typical \texorpdfstring{$\mathrm{LIS}$}{LIS} and the Bochkov--Petrov lower inequality}\label{subsec:weak_lower}

We now prove the lower bound in Theorem~\ref{thm:main_weak}. By \eqref{eq:weak_prob_extensions}, it
suffices to lower-bound $\sum_{\sigma\in\sym{n}} e(\permposet{\sigma})$.

\begin{proposition}\label{prop:weak_lower_bound}
As $n\to\infty$,
\[
\Prob(\sigma_1\wbo\sigma_2)
\ \ge\
\exp\Bigl(\bigl(-\tfrac12+o(1)\bigr)\,n\log n\Bigr).
\]
\end{proposition}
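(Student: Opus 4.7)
The plan is to lower-bound $e(\permposet{\sigma})$ uniformly on a $(1-o(1))$-fraction of $\sym{n}$, then substitute into the extension-count reduction \eqref{eq:weak_prob_extensions}. Since chains in $\permposet{\sigma}$ are exactly increasing subsequences of $\sigma$ and antichains are exactly decreasing subsequences, Greene's Theorem~\ref{thm:Greene} identifies the chain Greene--Kleitman--Fomin parameters as $c_k(\permposet{\sigma})=\lambda_k(\sigma)$, and the standard antichain dual of Greene's theorem gives $a_k(\permposet{\sigma})=\conj{\lambda}_k(\sigma)$, where $\lambda(\sigma)$ denotes the RSK shape. Applying the Bochkov--Petrov lower inequality (Theorem~\ref{thm:BP}) therefore yields
\[
e(\permposet{\sigma})\ \ge\ \prod_{i\ge 1} \conj{\lambda}_i(\sigma)!\,.
\]

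Next, I would condition on the typical event $G:=\{\sigma\in\sym{n}:\lambda_1(\sigma)\le 3\sqrt{n}\}$, which satisfies $|G|\ge(1-o(1))\,n!$ by Lemma~\ref{lem:LIS_3sqrtn}. For $\sigma\in G$, the conjugate partition $\conj{\lambda}(\sigma)$ has at most $3\sqrt{n}$ nonzero parts summing to $n$, so the log-sum inequality (Proposition~\ref{prop:logsum}) applied with all $b_i=1$ gives
\[
\sum_{i}\conj{\lambda}_i(\sigma)\log\conj{\lambda}_i(\sigma)\ \ge\ n\log\!\left(\frac{n}{3\sqrt{n}}\right)\ =\ \tfrac12\, n\log n-n\log 3.
\]
Combining with the crude Stirling bound $\log(k!)\ge k\log k-k$ (Theorem~\ref{thm:crude-stirling}) and the identity $\sum_i\conj{\lambda}_i(\sigma)=n$ produces the uniform lower bound
\[
\log e(\permposet{\sigma})\ \ge\ \sum_i\log\bigl(\conj{\lambda}_i(\sigma)!\bigr)\ \ge\ \tfrac12\, n\log n-O(n)\qquad\text{for every }\sigma\in G.
\]

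Finally, I would substitute this uniform bound into \eqref{eq:weak_prob_extensions}, discard the contribution from $\sigma\notin G$, and apply Stirling in the form $\log(n!)=n\log n-(1+o(1))\,n$ (Remark~\ref{rem:weakerstirlingapproximation}) to conclude
\[
\Prob(\sigma_1\wbo\sigma_2)\ \ge\ \frac{|G|}{(n!)^2}\exp\!\bigl(\tfrac12\, n\log n-O(n)\bigr)\ =\ \exp\!\bigl((-\tfrac12+o(1))\,n\log n\bigr),
\]
as required. I do not expect any serious obstacle: every input has been assembled in Section~\ref{sec:background}, and the only mildly nonroutine point is invoking the antichain form of Greene's theorem to identify $a_k(\permposet{\sigma})=\conj{\lambda}_k(\sigma)$ (which is the standard column companion to Theorem~\ref{thm:Greene}). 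The threshold $3\sqrt{n}$ is dictated by Lemma~\ref{lem:LIS_3sqrtn}; any $\Theta(\sqrt{n})$ cutoff would yield the same leading constant $\tfrac12$, which traces back to the square-root scale of the typical longest increasing subsequence.
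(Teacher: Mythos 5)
Your proof is correct and follows the same overall strategy as the paper: restrict to the typical set where $\lambda_1(\sigma)\le 3\sqrt n$ using Lemma~\ref{lem:LIS_3sqrtn}, lower-bound $e(\permposet{\sigma})$ on that set via the Bochkov--Petrov inequality, and then substitute into \eqref{eq:weak_prob_extensions}.

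The intermediate steps differ in two respects that are worth noting. First, you invoke the antichain (column) companion of Greene's Theorem to identify $a_k(\permposet{\sigma})=\conj{\lambda}_k(\sigma)$; the paper deliberately avoids this, using only Mirsky's Theorem~\ref{thm:Mirsky-theorem} to conclude that $\permposet{\sigma}$ is covered by $t=\lceil 3\sqrt n\rceil$ antichains and hence that $\sum_{i=1}^t a_i(\permposet{\sigma})=n$, which is all it needs. (In fact your argument does not really need the exact identification either --- your log-sum step only uses that there are at most $3\sqrt n$ nonzero antichain parameters summing to $n$, a fact Mirsky alone supplies --- so the dual-Greene invocation could be dropped without changing anything else.) Second, once both proofs have a length-$O(\sqrt n)$ decomposition of $n$, you estimate $\prod_i \log(a_i!)$ via the log-sum inequality plus crude Stirling, whereas the paper minimizes $\prod_i a_i!$ directly by a balancing argument and then applies Stirling to the balanced configuration. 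Both produce the same $\tfrac12\,n\log n - O(n)$ bound. Your route is slightly slicker once the log-sum inequality is on hand; the paper's is self-contained given its stated background. No gap.
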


\begin{proof} 
Set $t:=\lceil 3\sqrt{n}\rceil$ and define
\[
T\ :=\ \{\sigma\in \sym{n}:\ \text{the longest increasing subsequence of $\sigma$ has length at most } t\}.
\]
By Lemma~\ref{lem:LIS_3sqrtn}, the longest increasing subsequence of a uniform random $\sigma\in\sym{n}$
has length at most $3\sqrt{n}$ with probability $1-o(1)$, and since $t\ge 3\sqrt{n}$ this implies
$\Prob(\sigma\in T)=1-o(1)$. Therefore
\[
|T|\ =\ (1-o(1))\,n!.
\]

Fix $\sigma\in T$. Any chain in $\permposet{\sigma}$ is an increasing subsequence of $\sigma$, so
the height of $\permposet{\sigma}$ is at most $t$. By Mirsky's theorem
(Theorem~\ref{thm:Mirsky-theorem}), $\permposet{\sigma}$ can be partitioned into $t$ antichains. Equivalently, the antichain Greene--Kleitman--Fomin parameters $a_1(\permposet{\sigma}),\dots,a_t(\permposet{\sigma})$ satisfy
\[
a_1(\permposet{\sigma})+\cdots+a_t(\permposet{\sigma})\ =\ n.
\]
By the Bochkov--Petrov lower bound (Theorem~\ref{thm:BP}),
\[
e(\permposet{\sigma})\ \ge\ \prod_{i=1}^t a_i(\permposet{\sigma})!.
\]

Write $n=qt+r$ with $q:=\lfloor n/t\rfloor$ and $0\le r<t$.  We claim that among all integer
$t$-tuples $(a_1,\ldots,a_t)$ with $\sum_{i=1}^t a_i=n$, the product $\prod_{i=1}^t a_i!$ is minimized
when the entries are as equal as possible, \emph{i.e.}, when $t-r$ of them equal $q$ and $r$ of them equal
$q+1$.  Indeed, suppose $a>b+1$ are two entries. Then
\[
\frac{(a-1)!(b+1)!}{a!\,b!}=\frac{b+1}{a}\le 1,
\]
so
\[
(a-1)!(b+1)!\ \le\ a!\,b!.
\]
Thus, replacing the pair $(a,b)$ by $(a-1,b+1)$ (which preserves the sum) does not increase the
product of factorials, and strictly decreases it unless $a=b+1$. Iterating this balancing operation
until no pair differs by more than $1$, we reach the configuration with $t-r$ entries equal to $q$
and $r$ entries equal to $q+1$, which therefore minimizes $\prod_{i=1}^t (a_i!)$. Since the entries are nonnegative integers with fixed sum $n$, repeated balancing must terminate after finitely many steps (for instance, $\sum_{i=1}^t a_i^2$ strictly decreases at each step). Consequently,
\[
\prod_{i=1}^t a_i!\ \ge\ (q!)^{\,t-r}\,\big((q+1)!\big)^{\,r}.
\]

Therefore
\[
\sum_{\sigma\in\sym{n}} e(\permposet{\sigma})
\ \ge\ \sum_{\sigma\in T} e(\permposet{\sigma})
\ \ge\ (1-o(1))\,n!\,(q!)^{\,t-r}\,\big((q+1)!\big)^{\,r}.
\]

Plugging this into \eqref{eq:weak_prob_extensions} gives
\[
\Prob(\sigma_1\wbo\sigma_2)
\ \ge\ (1-o(1))\,\frac{(q!)^{\,t-r}\,\big((q+1)!\big)^{\,r}}{n!}.
\]
Taking logs and applying Theorem~\ref{thm:Stirlingapproximation} (Stirling's approximation) yields
\[
\log\big((q!)^{\,t-r}\,\big((q+1)!\big)^{\,r}\big)
\ =\ \Bigl(\tfrac12+o(1)\Bigr)\,n\log n\ +\ O(n),
\]
using $t=\Theta(\sqrt n)$ and hence $q=\Theta(\sqrt n)$. Consequently,
\[
\log \Prob(\sigma_1\wbo\sigma_2)
\ \ge\
-\Bigl(\tfrac12+o(1)\Bigr)\,n\log n,
\]
which is equivalent to the claimed bound.
\end{proof}

\subsection{Upper bound via shape averaging}\label{subsec:upper_to_Psi}

We now turn to the upper bound in Theorem~\ref{thm:main_weak}.  We combine the reduction
\eqref{eq:weak_prob_extensions} with the Bochkov--Petrov upper inequality and then average over the
RSK shape under Plancherel measure.

First, for every $\sigma_2\in\sym{n}$,
\[
\Prob(\sigma_1\wbo \sigma_2\mid \sigma_2)
= \frac{e\big(\permposet{\sigma_2^{-1}}\big)}{n!}.
\]
Conditioning on the RSK shape of $\sigma_2$ gives
\[
\Prob(\sigma_1\wbo \sigma_2)
\ =\ \sum_{\lambda\vdash n}\Prob(\lambda(\sigma_2)=\lambda)\cdot
 \Prob(\sigma_1\wbo \sigma_2\mid \lambda(\sigma_2)=\lambda).
\]

Fix $\sigma\in\sym{n}$ and write $\lambda=\lambda(\sigma)$ for its RSK shape.  By Theorem~\ref{thm:Greene} (Greene's theorem), together with the observation that chains in $\permposet{\sigma}$ are precisely increasing subsequences of $\sigma$ (and unions of $k$ chains correspond to unions of $k$ disjoint increasing subsequences), the Greene--Kleitman--Fomin chain functionals of $\permposet{\sigma}$ satisfy
\[
C_k\big(\permposet{\sigma}\big)\ =\ \lambda_1+\cdots+\lambda_k
\qquad (k\ge 1),
\]
and therefore the corresponding chain parameters are
\[
c_k\big(\permposet{\sigma}\big)
:= C_k\big(\permposet{\sigma}\big)-C_{k-1}\big(\permposet{\sigma}\big)
= \lambda_k.
\]
Applying the Bochkov--Petrov upper bound (Theorem~\ref{thm:BP}) to $\permposet{\sigma_2^{-1}}$ yields
\[
e\big(\permposet{\sigma_2^{-1}}\big)
\ \le\ \frac{n!}{\prod_{k\ge 1} c_k\big(\permposet{\sigma_2^{-1}}\big)!}
\ =\ \frac{n!}{\prod_{k\ge 1} \lambda_k!},
\]
so that
\[
\Prob(\sigma_1\wbo \sigma_2\mid \lambda(\sigma_2)=\lambda)
 \le\ \frac{1}{\prod_{k\ge 1} \lambda_k!}.
\]

By Proposition~\ref{prop:Plancherel} $\lambda(\sigma_2)$ is distributed according to Plancherel measure, and we obtain
\begin{align*}
\Prob(\sigma_1\wbo \sigma_2)
&\le \sum_{\lambda\vdash n}\planch(\lambda)\cdot \frac{1}{\prod_i \lambda_i!}\\
&= \sum_{\lambda\vdash n}\frac{n!}{\hookprod(\lambda)^2}\cdot \frac{1}{\prod_i \lambda_i!}\\
&= n!\sum_{\lambda\vdash n}\exp\!\Bigl(- 2\log\hookprod(\lambda) - \sum_i \log(\lambda_i!)\Bigr).
\end{align*}

We define the exponent functional
\[
\Psi(\lambda)
\coloneqq
2\log\hookprod(\lambda)+\sum_i\log(\lambda_i!).
\]
Then
\[
\Prob(\sigma_1\wbo \sigma_2)
\ \le\ n!\sum_{\lambda\vdash n} e^{-\Psi(\lambda)}.
\]

The main deterministic input is the following uniform lower bound for $\Psi(\lambda)$.

\begin{theorem}\label{thm:Psi}
For all $n \in \mathbb{N},$ and all  $\lambda\vdash n$,
\[
\Psi(\lambda)\ \ge\ \frac{3}{2}\,n\log n\ -\ 5 n.
\]
\end{theorem}

\begin{remark} 
We do not attempt to optimize the constant $5$ appearing above, instead preferring a clean statement.  We note, however, that the proof of Lemma \ref{lem:strip} below requires that the constant is at least $\frac{9}{2}.$
\end{remark}

Assuming Theorem~\ref{thm:Psi}, the upper bound in Theorem~\ref{thm:main_weak} is immediate.

\begin{proposition}
\label{prop:Psi-to-prob}
Assume Theorem~\ref{thm:Psi}.  Then
\[
\Prob(\sigma_1\wbo \sigma_2)
\ \le\
\exp\!\Bigl(-\tfrac12\,n\log n+O(n)\Bigr).
\]
\end{proposition}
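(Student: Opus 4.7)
The plan is to simply assemble three ingredients that are already in hand: the pre-reduction bound
\[
\Prob(\sigma_1\wbo\sigma_2)\ \le\ n!\sum_{\lambda\vdash n}e^{-\Psi(\lambda)}
\]
derived in Section~\ref{subsec:upper_to_Psi}, the uniform lower bound $\Psi(\lambda)\ge \tfrac{3}{2}n\log n - 5n$ supplied by Theorem~\ref{thm:Psi}, and standard asymptotic estimates for $n!$ and for the partition count $p(n)$.

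First I would apply Theorem~\ref{thm:Psi} termwise to obtain
\[
\sum_{\lambda\vdash n}e^{-\Psi(\lambda)}\ \le\ p(n)\,\exp\!\Bigl(-\tfrac{3}{2}n\log n + 5n\Bigr).
\]
Next I would invoke the Hardy--Ramanujan estimate (Theorem~\ref{thm:Hardy-Ramanujan}) to absorb the partition count as $p(n)=\exp(O(\sqrt{n}))$, which is negligible on the scale of $n\log n$ and in fact subsumed by the $O(n)$ error term. Finally, using Stirling's approximation in the form of Remark~\ref{rem:weakerstirlingapproximation}, namely $\log(n!)=n\log n+O(n)$, I would combine these to conclude
\[
\log\Prob(\sigma_1\wbo\sigma_2)\ \le\ \bigl(n\log n+O(n)\bigr)+\bigl(-\tfrac{3}{2}n\log n + 5n\bigr)+O(\sqrt{n})\ =\ -\tfrac{1}{2}n\log n+O(n),
\]
which is precisely the claimed bound after exponentiating.

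There is no real obstacle here: once Theorem~\ref{thm:Psi} is granted, the proposition is an immediate consequence of the elementary estimates already collected in Section~\ref{sec:background}. All of the analytic work has been front-loaded into Theorem~\ref{thm:Psi} (whose inductive row-peeling proof is the genuinely substantive step), so Proposition~\ref{prop:Psi-to-prob} is essentially a bookkeeping computation. I would write it as a short paragraph making the three substitutions above explicit and pointing out that the $O(\sqrt{n})$ contribution from $p(n)$ is dominated by the $O(n)$ Stirling error.
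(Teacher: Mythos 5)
Your proposal is correct and follows essentially the same route as the paper's proof: both apply the bound $\Prob(\sigma_1\wbo\sigma_2)\le n!\sum_{\lambda\vdash n}e^{-\Psi(\lambda)}$, insert the uniform bound from Theorem~\ref{thm:Psi}, use Stirling (Remark~\ref{rem:weakerstirlingapproximation}) for $\log(n!)$, and absorb the partition count via Theorem~\ref{thm:Hardy-Ramanujan}. The bookkeeping is identical, so there is nothing to add.
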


\begin{proof}
By Theorem~\ref{thm:Psi}, for all $\lambda\vdash n$ and all $n$ one has
\[
\Psi(\lambda)\ \ge\ \tfrac32\,n\log n - 5 n.
\]

By Remark~\ref{rem:weakerstirlingapproximation}, $\log(n!)=n\log n-(1+o(1))\,n$, equivalently
$n! = \exp\!\bigl(n\log n-(1+o(1))\,n\bigr)$, and therefore
\begin{align*}
\Prob(\sigma_1\wbo \sigma_2)
\ &\le\ \sum_{\lambda\vdash n}\exp\!\Bigl(-\Psi(\lambda) + n\log n-(1+o(1))\,n\Bigr)\\
\ &\le\ \sum_{\lambda\vdash n}\exp\Bigl(-\tfrac12\,n\log n + O(n)\Bigr)\\
\ &=\ p(n)\cdot \exp\Bigl(-\tfrac12\,n\log n + O(n)\Bigr),
\end{align*}
where $p(n)$ is the number of partitions of $n$. Using Theorem~\ref{thm:Hardy-Ramanujan} (Hardy-Ramanujan),
$p(n)=\exp(O(\sqrt{n}))$, we obtain that,
\[
\Prob(\sigma_1\wbo \sigma_2)
\ \le\ \exp\Bigl(-\tfrac12 n\log n + O(n) + O(\sqrt{n})\Bigr)
\ =\ \exp\Bigl(-\tfrac12 n\log n + O(n)\Bigr).
\]
\end{proof}

We now turn toward proving Theorem~\ref{thm:Psi}.

\subsection{The proof of Theorem~\ref{thm:Psi}}\label{subsec:Psi_reduction}

The proof of Theorem~\ref{thm:Psi} proceeds in two estimates. First we show that the inequality 

\begin{equation} 
\Psi(\lambda) \geq \frac{3}{2} n \log n - 5 n. \label{eqn:psilb} 
\end{equation} 
holds for $\lambda$ with $\lambda_1 \leq \sqrt{n}$.  Then, we show that any partition with $\lambda_1 \geq \sqrt{n}$ cannot be a smallest counterexample to Theorem ~\ref{thm:Psi}.  These, combined, prove the result.  

For what follows, suppose $\lambda \vdash n$, where $s = \lambda_1$ denotes the number of columns of $\lambda$ and $t = \lambda_1'$ denote the number of rows. Let $\lambda_1, \lambda_2, \dots, \lambda_t$ denote the row lengths of $\lambda$ and $\lambda_1', \lambda_2' \dots, \lambda'_{s}$ denote the column lengths.

\begin{lemma} 
Suppose $\lambda \vdash n$, and $s = s(\lambda) \leq \sqrt{n}$.  
Then \[ 
	\Psi(\lambda) \geq \frac{3}{2} n \log n - 5 n. 
\] \label{lem:skinny}
\end{lemma}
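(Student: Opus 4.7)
The plan is to combine the column-hook lower bound $\pi(\lambda)\ge\prod_j \lambda_j'!$ from Lemma~\ref{lem:row_col_prod}, the crude Stirling lower bound $\log k!\ge k\log k - k$ (Theorem~\ref{thm:crude-stirling}), and a single application of the log-sum inequality (Proposition~\ref{prop:logsum}) to the column sequence $\{\lambda_j'\}_{j=1}^s$ with the non-standard weights $b_j=1/\sqrt{j}$, for which Proposition~\ref{prop:sumi} provides the right control on $\sum_j b_j$. The column-hook bound and Stirling immediately give
\[
\Psi(\lambda)\ \ge\ 2\sum_j \log(\lambda_j'!)+\sum_i \log(\lambda_i!)\ \ge\ 2\sum_j \lambda_j'\log\lambda_j'+\sum_i \log(\lambda_i!)-2n.
\]

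Next, I would apply Proposition~\ref{prop:logsum} with $a_j=\lambda_j'$ and $b_j=1/\sqrt{j}$ for $1\le j\le s$. Since $\sum_j a_j = n$ and, by Proposition~\ref{prop:sumi} combined with $s\le\sqrt{n}$, $\sum_{j=1}^s 1/\sqrt{j}\le 2\sqrt{s}\le 2n^{1/4}$, this produces
\[
\sum_j \lambda_j'\log\lambda_j'+\tfrac12\sum_j \lambda_j'\log j\ \ge\ n\log\!\frac{n}{2n^{1/4}}\ =\ \tfrac{3}{4}n\log n-n\log 2.
\]
The key algebraic identity is $\sum_j \lambda_j'\log j=\sum_i \log(\lambda_i!)$, valid because both sides equal $\sum_{(i,j)\in\lambda}\log j$ (grouped by column versus by row). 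Substituting this into the previous display and doubling gives $2\sum_j \lambda_j'\log\lambda_j'+\sum_i \log(\lambda_i!)\ge \tfrac{3}{2}n\log n-2n\log 2$. Plugging this back into the first display yields
\[
\Psi(\lambda)\ \ge\ \tfrac{3}{2}n\log n-2n\log 2-2n\ \ge\ \tfrac{3}{2}n\log n-5n,
\]
since $2+2\log 2<4<5$.

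The main obstacle is discovering the correct weight $b_j=1/\sqrt{j}$. The naive weight $b_j=1$ in the log-sum inequality yields only the Jensen bound $\sum_j \lambda_j'\log\lambda_j'\ge n\log(n/s)\ge \tfrac12 n\log n$, producing $\Psi(\lambda)\ge n\log n-O(n)$ and falling short of the target by precisely $\tfrac12 n\log n$. Choosing $b_j=1/\sqrt{j}$ instead lets Proposition~\ref{prop:sumi} shrink $\sum_j b_j$ to $O(n^{1/4})$, which boosts $n\log(n/\sum b_j)$ from $\tfrac12 n\log n$ up to $\tfrac{3}{4}n\log n$; simultaneously, the compensating $\tfrac12\log j$ terms generated by this weighting are absorbed for free via the identity $\sum_j \lambda_j'\log j=\sum_i \log(\lambda_i!)$ into the $\sum_i\log(\lambda_i!)$ summand already present in $\Psi$. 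This double use of the same cell-sum is what closes the missing $\tfrac12 n\log n$ gap.
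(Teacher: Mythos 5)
Your proposal is correct and is essentially the paper's own argument: the same column-hook bound plus crude Stirling, the same cell-counting identity $\sum_i \log(\lambda_i!)=\sum_j \lambda_j'\log j$, and the same single application of the log-sum inequality with weights $b_j=j^{-1/2}$ controlled by Proposition~\ref{prop:sumi} and $s\le\sqrt{n}$. The only difference is presentational (you apply the log-sum inequality and then double, while the paper folds the factor $2$ into $\sum_i c_i\log(c_i/i^{-1/2})$ first), and your final constant bookkeeping matches the paper's $\frac32 n\log n-2n\log 2-2n\ge\frac32 n\log n-5n$.
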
 

\begin{proof} 
To make the formulae slightly easier to read, we let $c_i = \lambda_i'$ be the length of the $i$th column of $\lambda$ and let $r_i=\lambda_i$ denote the length of the $i$th row.   

Applying Lemma \ref{lem:row_col_prod} to $\pi(\lambda)$ along columns, and using the crude Stirling inequality Theorem~\ref{thm:crude-stirling}, which says that $\log(c_i!) \geq c_i \log c_i -  c_i$, we obtain 
\begin{align*} 
\Psi(\lambda) = 2 \log(\pi(\lambda)) + \sum_{j=1}^{t} \log(r_j!) &\geq 2 \sum_{i=1}^{s} \log(c_i!) + \sum_{j=1}^{t} \log(r_j!) \\
&\geq \left( 2 \sum_{i=1}^{s}  c_i \log c_i + \sum_{j=1}^{t} \log(r_j!) \right) - 2n.
\end{align*} 
Here we used the fact that $\sum c_i = n$ in the last inequality.

Now, rearranging the terms in $\log(r_j!)$, note that 
\[
\sum_{j=1}^{t}\log(r_j!)
=\log\!\Big(\prod_{j=1}^{t} r_j!\Big)
=\log\!\Big(\prod_{i=1}^{s} i^{c_i}\Big)
=\sum_{i=1}^{s} c_i\log i,
\]
from which 
\begin{align*} 
\Psi(\lambda) &\geq \left( 2 \sum_{i=1}^{s}  c_i \log c_i + \sum_{j=1}^{s} c_i \log(i)  \right) - 2n \\
&= 2  \left( \sum_{i=1}^{s} c_i \log \left( \frac{c_i}{i^{-1/2}} \right)  \right) - 2n.
\end{align*}
We now apply the log-sum inequality (Proposition \ref{prop:logsum}) along with Proposition \ref{prop:sumi}  to observe that
\begin{align*}
\Psi(\lambda) \geq 2 \left( \sum_{i=1}^s c_i \right) \log \left( \frac{ \sum_{i=1}^{s}  c_i}{\sum_{i=1}^{s}  \frac{1}{\sqrt{i}}} \right) - 2n \geq  2n \log \left( \frac{n}{2\sqrt{s}} \right) - 2n.
\end{align*}  

This quantity is decreasing in $s$.  Using the fact that $s \leq \sqrt{n}$ one obtains 
\[
\Psi(\lambda) \geq 2n\log\!\Big(\frac{n^{3/4}}{2}\Big)-2n=\frac32\,n\log n-2n\log 2-2n\ge \frac32\,n\log n-5n,
\]
as desired, where the final inequality used that $-2\log(2) > -3$. This is clearly not tight, but we need the extra flexibility in the inductive step.  
\end{proof}

 The final observation is the inductive step:  no smallest counterexample can have $s(\lambda) > \sqrt{n}$. Note that Lemma \ref{lem:skinny} serves as a base case for this claim, since it proves the result in particular for $n=1$.  Combined with Lemma \ref{lem:skinny}, this claim completes the proof of Theorem  \ref{thm:Psi}.   

\begin{lemma} 
Suppose $\lambda \vdash n$ and $s = s(\lambda) \geq  \sqrt{n}$.  Then $\lambda$ is not a smallest counterexample to Theorem \ref{thm:Psi}.  \label{lem:strip}
\end{lemma}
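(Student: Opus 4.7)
The plan is to run an induction on $n$, with Lemma~\ref{lem:skinny} providing the base case (it handles $n=1$, where $s(\lambda)=1\le\sqrt{1}$), and the inductive step reducing $\lambda$ with $s\ge\sqrt{n}$ to the partition $\mu \coloneq (\lambda_2,\ldots,\lambda_t)\vdash n-s$ obtained by deleting the top row. Assuming the bound \eqref{eqn:psilb} for every partition of an integer smaller than $n$, the inductive hypothesis supplies $\Psi(\mu)\ge \frac{3}{2}(n-s)\log(n-s)-5(n-s)$, with the convention $0\log 0=0$ taking care of the degenerate case $s=n$, $\mu=\emptyset$.

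The heart of the argument is a hook-length telescoping identity. For every cell $(i,j)$ of $\mu$, the hook length equals $h_\lambda(i+1,j)$: removing the first row decreases the column length $\lambda'_j$ by one and shifts the row index by one, and these two changes cancel in $h_\lambda(i,j)=\lambda_i+\lambda'_j-i-j+1$. Consequently,
\[
\pi(\lambda)\;=\;\pi(\mu)\cdot\prod_{j=1}^{s}h_\lambda(1,j),
\]
and a short manipulation gives $\Psi(\lambda)-\Psi(\mu)=2\sum_{j=1}^{s}\log h_\lambda(1,j)+\log(s!)$. Applying Lemma~\ref{lem:row_col_prod} to the top row yields $\prod_{j=1}^{s}h_\lambda(1,j)\ge s!$, and Theorem~\ref{thm:crude-stirling} then gives $\Psi(\lambda)-\Psi(\mu)\ge 3\log(s!)\ge 3s\log s-3s$.

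Combining these two estimates, the inductive step reduces to the arithmetic inequality
\[
3s\log s+2s\;\ge\;\tfrac{3}{2}\bigl[n\log n-(n-s)\log(n-s)\bigr].
\]
Since $\tfrac{d}{dx}(x\log x)=\log x+1\le \log n+1$ on $[n-s,n]$, the right-hand side is bounded above by $\tfrac{3}{2}s(\log n+1)$, and it suffices to verify $6\log s+1\ge 3\log n$; this is provided by $s\ge\sqrt{n}$ with a small amount of slack. The main obstacle I anticipate is recognizing the hook-product telescoping identity — once that is in hand, the rest is careful bookkeeping with crude Stirling. The arithmetic will be nearly tight: the constant $5n$ in the statement has only $\tfrac{1}{2}n$ slack over the minimal $\tfrac{9}{2}n$ noted in the remark following Theorem~\ref{thm:Psi}, and that slack is precisely what the inductive step consumes.
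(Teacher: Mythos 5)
Your proposal is correct, and its skeleton is the same as the paper's: peel off the first row to get $\mu\vdash n-s$, use the telescoping $\pi(\lambda)=\pi(\mu)\prod_{j}h_\lambda(1,j)$ together with Lemma~\ref{lem:row_col_prod} and Theorem~\ref{thm:crude-stirling} to get $\Psi(\lambda)\ge\Psi(\mu)+3\log(s!)\ge\Psi(\mu)+3s\log s-3s$, then invoke the minimality/induction hypothesis on $\mu$. Where you genuinely diverge is the endgame. The paper feeds the two bounds into the function $f(x)=\tfrac32(n-x)\log(n-x)+3x\log x-5n+2x$, checks $f'(x)>0$ for $x\ge\sqrt n$, evaluates at $x=\sqrt n$, and then needs the expansion $\log(1-u)\ge -u-u^2$ (valid for $u\le 1/2$, i.e.\ $n\ge 4$) plus a hand check of $n=1,2,3$; it also treats the single-row case $s=n$ separately at the outset. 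You instead reduce to the inequality $3s\log s+2s\ge\tfrac32\bigl[n\log n-(n-s)\log(n-s)\bigr]$ and dispatch it by convexity of $x\log x$ (the tangent-line bound $n\log n-(n-s)\log(n-s)\le s(\log n+1)$), which after dividing by $s$ becomes $6\log s+1\ge 3\log n$ and follows immediately from $s\ge\sqrt n$. This buys a shorter, case-free finish: no monotonicity computation, no small-$n$ special cases, and the single-row case is absorbed uniformly via $\mu=\emptyset$ and the $0\log 0=0$ convention. Your closing observation is also consistent with the paper's remark: your argument needs exactly $C-3\ge\tfrac32$, i.e.\ the constant $\tfrac92$, so the stated $5$ has genuine slack.
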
 
\begin{proof} 

Suppose that $\lambda \vdash n$ with $s \geq \sqrt{n}$ and that all partitions on $n' < n$ satisfy \eqref{eqn:psilb}.  We note that if $\lambda$ consists of a single row, $\Psi(\lambda) = 3 \log n! \geq 3n\log n - 3n$ by Theorem~\ref{thm:crude-stirling}, so \eqref{eqn:psilb} holds. We henceforth assume that $\lambda$ contains at least two rows.   

Let $\mu$ be the partition of $n-s$ obtained by removing the first row of $\lambda$.  Observing that, using Lemma \ref{lem:row_col_prod} regarding the product along the first row of hook lengths, \[
\pi(\lambda) = \pi(\mu) \cdot \prod_{j=1}^{s} h_{\lambda}(1,j) \geq \pi(\mu) \cdot (s!),
\] 	
we obtain
\begin{align}
\Psi(\lambda)
&=2\log(\pi(\lambda))+\sum_{i=1}^{t}\log(\lambda_i!) \nonumber\\
&\ge 2\log(\pi(\mu)) + 2\log(s!)\;+\;\log(s!)\;+\;\sum_{i=2}^{t}\log(\lambda_i!) \nonumber\\
&= \Psi(\mu) + 3\log(s!) \nonumber\\
&\ge \left(\frac32\,(n-s)\log(n-s)-5(n-s)\right) + 3s\log s - 3s, \label{eqn:wts}
\end{align}
It suffices, then, to prove that \eqref{eqn:wts} is at least $\frac{3}{2} n \log n - 5n$.  We first observe that the function
\[
f(x) = \frac{3}{2} (n-x) \log(n-x) + 3x \log x -5n + 2x  
\]
has
\[
f'(x) = -\frac{3}{2} - \frac{3}{2}\log(n-x) + 3 + 3\log(x) + 2 = \frac{3}{2} \log(\frac{x^2}{n-x}) + \frac{7}{2}
\]
which is positive for $x \geq \sqrt{n}$.  This follows as, since $n \geq \sqrt{n}$, one has $\frac{x^2}{n-x} \geq 1$, so the logarithm is positive, and $f'(x) \ge \frac{7}{2}$.
This means, given our condition on $s$, that \eqref{eqn:wts} is minimized when $s = \sqrt{n}$. Thus \begin{align*} 
	\Psi(\lambda)  &\geq \frac{3}{2} (n - \sqrt{n}) \log (n - \sqrt{n}) + 3 \sqrt{n} \log(\sqrt{n}) - 5n + 2\sqrt{n} \\
	&= \frac{3}{2} \left( (n - \sqrt{n}) \log(n) + (n - \sqrt{n}) \log\left(1 -\frac{1}{\sqrt{n}}\right) + 2\sqrt{n} \log(\sqrt{n}) \right) - 5n + 2\sqrt{n} \\
	&= \frac{3}{2} \left( n \log n +  (n - \sqrt{n}) \log\left(1 -\frac{1}{\sqrt{n}}\right) \right) -5n + 2\sqrt{n} \\
	&= \left(\frac{3}{2}n \log n - 5n \right) + \left( \frac{3}{2} (n - \sqrt{n}) \log\left(1 - \frac{1}{\sqrt{n}}\right) + 2\sqrt{n} \right).
\end{align*}    
We can conclude $\Psi(\lambda)>\frac{3}{2}n\log -5n$, as desired, once we show that \[
 \frac{3}{2} (n - \sqrt{n}) \log\left(1 - \frac{1}{\sqrt{n}}\right) + 2\sqrt{n} \geq 0.
\] 

Indeed, for $n \geq 4$ we can apply the fact that $\log(1-x)\ge -x-x^2$, which holds at least on the interval $0\le x\le 1/2$. Plugging in $x=1/\sqrt n$ gives
$\log(1-1/\sqrt n)\ge -(1/\sqrt n)-(1/n)$, and hence
\begin{align*}
\frac{3}{2}(n-\sqrt n)\log\!\Big(1-\frac{1}{\sqrt n}\Big)+2\sqrt n
&\ge \frac{3}{2}(n-\sqrt n)\Big(-\frac{1}{\sqrt n}-\frac{1}{n}\Big)+2\sqrt n\\
&= -\frac{3}{2}\Big(\sqrt n-1\Big)\;-\;\frac{3}{2}\Big(1-\frac{1}{\sqrt n}\Big)\;+\;2\sqrt n\\
&= \frac12\sqrt n+\frac{3}{2\sqrt n}\ >\ 0.
\end{align*}
This proves the inequality for $n\ge 4$. The remaining cases, $n=1,2,3$, can be checked directly. 

\end{proof}

With the above proof, our first main result is an immediate corollary.

\begin{proof}[Proof of Theorem~\ref{thm:main_weak}]
We now combine the lower and upper bounds obtained in the preceding subsections.
The lower bound is given by Proposition~\ref{prop:weak_lower_bound}, and the upper bound is given by Proposition~\ref{prop:Psi-to-prob}. Together these imply the stated asymptotics for $\Prob(\sigma_1\wbo\sigma_2)$ and, equivalently, for $|\weakintervals{n}|$.
\end{proof}

\section{Proof of lower bound on strong Bruhat order}\label{sec:strong}

In this section, we prove Theorem \ref{thm:strong} by giving a construction of many intervals.  Though our construction is deterministic, the analysis is probabilistic and uses properties of simple random walk to show that most of the pairs we construct are comparable.   

 For a fixed $t \geq 1$, we let $\Sigma_1^{(t)}$ denote the set of all permutations which, in one line notation, have the form $\tilde{\sigma}_1 \tilde{\sigma}_2 \tilde{\sigma}_3$, where \begin{itemize} 
\item $\tilde{\sigma}_1$ is a permutation of $\{1,2, \dots, t\}$,
\item $\tilde{\sigma}_2$ is a permutation of $\{t+1, t+2, \dots, n-t\}$,
\item $\tilde{\sigma}_3$ is a permutation of $\{n-t+1, n-t+2, \dots, n\}$.
\end{itemize} 
$\Sigma_2^{(t)}$ is the set of permutations of the form $\tilde{\sigma}_3\tilde{\sigma}_2 \tilde{\sigma}_1$.  Then the main technical tool to derive Theorem \ref{thm:strong} is

\begin{lemma} \label{lem:strongLB}
Suppose $n/2 \geq  t \geq 3 \sqrt{n\log n}.$  Then all but $o(|\Sigma_1^{(t)}||\Sigma_2^{(t)}|)$ of the pairs in $\Sigma_1 ^{(t)} \times \Sigma_2^{(t)}$ are comparable.  
\end{lemma}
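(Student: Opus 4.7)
The plan is to verify $\sigma\sbo\omega$ via the tableau criterion (Theorem~\ref{thm:tableau-criterion}) for a $1-o(1)$ fraction of pairs $(\sigma,\omega)\in\Sigma_1^{(t)}\times\Sigma_2^{(t)}$, from which comparability is immediate. The criterion demands that for all $k,m\in[n]$,
\[
N^\sigma_k(m)\ :=\ \bigl|\{j\le k:\sigma(j)\le m\}\bigr|\ \ge\ \bigl|\{j\le k:\omega(j)\le m\}\bigr|\ =:\ N^\omega_k(m).
\]
The block structures of $\Sigma_1^{(t)}$ and $\Sigma_2^{(t)}$ make this inequality automatic outside a single ``middle'' regime of $(k,m)$, and inside that regime it collapses to a one-sided random-walk deviation bounded by Proposition~\ref{prop:mcdiarmid} once $t\ge 3\sqrt{n\log n}$.

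\smallskip
\noindent\textbf{Block-case reduction.} A direct inspection from the block decomposition shows that $N^\sigma_k(m)\ge N^\omega_k(m)$ holds automatically whenever $k\le t$, $k>n-t$, $m\le t$, or $m\ge n-t+1$: for instance, if $k\le t$ and $m\le n-t$ then $\omega(1),\ldots,\omega(k)\in\{n-t+1,\ldots,n\}$ gives $N^\omega_k(m)=0$; if $k>n-t$ then, using that $\tilde\sigma_2$ and $\tilde\omega_2$ exhaust the \emph{same} set of middle values, one computes $N^\sigma_k(m)-N^\omega_k(m)=n-k\ge0$ for $t<m\le n-t$, with the remaining end-block cases reducing to pigeonhole. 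The only surviving constraints lie in $t<k\le n-t$ and $t<m\le n-t$. Reindexing the middle blocks by $j\in[1,n-2t]$ and setting $X_m(j):=\mathbf{1}[\tilde\sigma_2(j)\le m]$, $Y_m(j):=\mathbf{1}[\tilde\omega_2(j)\le m]$, the surviving family of inequalities collapses to the single requirement that
\[
D_j(m)\ :=\ \sum_{l=1}^{j}\bigl(Y_m(l)-X_m(l)\bigr)\ \le\ t\qquad\text{for every } j\in[1,n-2t]\text{ and } m\in(t,n-t].
\]
Because $\tilde\sigma_2,\tilde\omega_2$ are independent uniform permutations of the same $(n-2t)$-element set, the sets $\{X_m=1\}$ and $\{Y_m=1\}$ are independent uniform $(m-t)$-subsets of $[n-2t]$, and $D_\cdot(m)$ is mean-zero with $D_{n-2t}(m)=0$.

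\smallskip
\noindent\textbf{Concentration and union bound.} For each fixed $(j,m)$, represent each of $\tilde\sigma_2$ and $\tilde\omega_2$ through a Fisher--Yates shuffle, supplying $2(n-2t)$ independent uniform input coordinates. A standard conjugation argument shows that changing a single Fisher--Yates coordinate alters the resulting permutation by exactly one transposition, and therefore perturbs $D_j(m)$ by at most $2$. Applying Proposition~\ref{prop:mcdiarmid} with $c_i=2$ and $2(n-2t)$ coordinates yields
\[
\Prob\bigl(D_j(m)>t\bigr)\ \le\ \exp\!\Bigl(-\frac{t^2}{4(n-2t)}\Bigr)\ \le\ \exp\!\Bigl(-\frac{t^2}{4n}\Bigr).
\]
A union bound over the at most $n^2$ pairs $(j,m)$ gives a total failure probability of at most $n^{2}\exp(-t^2/(4n))$. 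With $t\ge 3\sqrt{n\log n}$ the exponent is at least $\tfrac{9}{4}\log n$, so the total bound is $n^{2-9/4}=n^{-1/4}=o(1)$. Hence a $1-o(1)$ fraction of pairs in $\Sigma_1^{(t)}\times\Sigma_2^{(t)}$ satisfy $\sigma\sbo\omega$ and are in particular comparable.

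\smallskip
\noindent\textbf{Main obstacle.} The principal care needed is the non-independence of $X_m(\cdot)$ across $j$: the indicators come from a permutation rather than from i.i.d.\ samples, so Proposition~\ref{prop:mcdiarmid} does not apply directly with $j$-indexed inputs. The resolution—re-expressing each permutation through Fisher--Yates to obtain genuinely independent coordinates, together with the verification that each such coordinate only perturbs the final permutation by a single transposition and hence $D_j(m)$ by at most $2$—is standard but must be set up once. One could alternatively substitute a sampling-without-replacement concentration inequality (Serfling-type) at this step without changing the conclusion. Everything else is bookkeeping: the careful case check reducing to the middle regime, and the union-bound accounting that makes $t=3\sqrt{n\log n}$ the correct scale.
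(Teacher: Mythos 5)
Your proposal is correct and follows the same high-level architecture as the paper's proof (tableau/Gale criterion, casework that isolates the middle regime, McDiarmid concentration, and a union bound over $\Theta(n^2)$ events), but the concentration step is handled by a genuinely different device. The paper first Poissonizes—replaces the two uniform $(k-t)$-subsets of the middle block by independent i.i.d.\ Bernoulli indicators, applies McDiarmid with $c_i=1$ there, and then conditions back on the sizes being exactly $k-t$, paying a $\Prob(\mathcal I)>n^{-2}$ factor. You instead represent each middle-block permutation by a Fisher--Yates shuffle to obtain a product of genuinely independent input coordinates and apply McDiarmid directly, with no conditioning cost. Both are valid; the paper's route has a trivial bounded-difference check at the price of the conditioning factor, while yours avoids conditioning at the price of a more delicate bounded-difference argument. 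Your parameterization of the failure events—by $(j,m)=$ (position in the middle block, value threshold) rather than the paper's $(k,k')=$ (prefix length, walk step)—is dual to the paper's but otherwise equivalent.

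One minor technical slip, which does not affect the conclusion: changing a single Fisher--Yates coordinate does \emph{not} in general alter the output by a single transposition. If $\pi=\rho\,\tau_i\,\gamma$ and $\pi'=\rho\,\tau_i'\,\gamma$ with $\tau_i=(i,J_i)$, $\tau_i'=(i,J_i')$, then $\pi'\pi^{-1}$ is a conjugate of $\tau_i'\tau_i$, which is a $3$-cycle when $i,J_i,J_i'$ are distinct. Thus the two output permutations differ in up to three positions. Nevertheless, because the values at those positions are merely cyclically permuted, the multiset of indicators $\mathbf 1[\cdot\le m]$ at those positions is preserved, and a direct check shows the partial sum $D_j(m)$ changes by at most $1$, not $3$. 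So your stated $c_i=2$ is safe (indeed $c_i=1$ suffices), but the ``exactly one transposition'' justification should be replaced by the $3$-cycle observation, or by the alternative you already flag (a Serfling-type inequality for sampling without replacement).
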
 

Theorem \ref{thm:strong} then follows immediately from the lemma.

\begin{proof}[Proof of Theorem \ref{thm:strong} from Lemma \ref{lem:strongLB}]
Per Lemma \ref{lem:strongLB}, so long as $n/2\geq t \geq 3 \sqrt{n \log n}$, $|\strongintervals{n}| \geq (1-o(1)) |\Sigma_1^{(t)}||\Sigma_2^{(t)}|.$ 
Let $t = 3 \sqrt{n \log n}$.  Then, using Stirling's approximation,
\begin{align*}
\log(|\Sigma_1^{(t)}|) = \log(|\Sigma_2^{(t)}|) &= \log((t!)^2 (n-2t)!)\\
&=2\log(t!)+\log((n-2t)!)\\
&=2t\log t-2t+(n-2t)\log(n-2t)-(n-2t)+O(\log(n-2t))\\
&=2t\log t+(n-2t)\log n+(n-2t)\log\left(1-\frac{2t}{n}\right)+O(\log (n-2t))\\
&=\log(n!)+2t\log(t/n)+(n-2t)\log\!\left(1-\frac{2t}{n}\right)+O(\log n)\\
&=\log(n!)+2t\log(t/n)-2t+O\!\left(\frac{t^2}{n}+\log n\right),
\end{align*}
where the second to last line uses Stirling's approximation ``backwards'' to introduce a $\log (n!)$ and the last line uses the Taylor expansion $\log(1-x)=-x+O(x^2)$ with $x=2t/n$.
Using $t=3\sqrt{n\log n}$, we have $\frac{t^2}{n}=9\log n=o\!\big(\sqrt n(\log^{3/2}n)\big)$, while
\[
2t\log(t/n)
=6\sqrt{n\log n}\left(\log 3+\tfrac12\log\log n-\tfrac12\log n\right)
=-(3+o(1))\sqrt n\,(\log^{3/2} n).
\]
Therefore,
\[
\log(|\Sigma_1^{(t)}|) = \log(|\Sigma_2^{(t)}|)
=\log(n!)-(3+o(1))\sqrt n\,(\log^{3/2} n).
\]
Exponentiating, we find that 
\[
|\Sigma_1^{(t)}|=|\Sigma_2^{(t)}|=n!\exp\left(-3\sqrt{n}(\log^{3/2} n)+O(\log n)\right).
\]
Thus $|\Sigma_1^{(t)}||\Sigma_2^{(t)}| \geq (n!)^2 \exp(-(6+o(1))\sqrt{n} (\log^{3/2} n))$, yielding the claimed bound of Theorem \ref{thm:strong}.
\end{proof} 

We now turn to proving Lemma \ref{lem:strongLB}.  Of course, not all pairs in $\Sigma_1^{(t)} \times \Sigma_2^{(t)}$ are comparable, but it suffices to prove that a pair uniformly randomly chosen from $\Sigma_1^{(t)} \times \Sigma_2^{(t)}$ is comparable with high probability.

To this end we recall that comparability in the Bruhat order can be cast in terms of comparability in the Gale order. For $\sigma\in\sym{n}$ and $k\in[n]$, denote by $S(\sigma,k)$ the set $\{\sigma(1),\ldots, \sigma(k)\}$. Then, Theorem~\ref{thm:tableau-criterion} says that two permutations $\sigma,\pi\in\sym{n}$ are comparable in the Bruhat order (with $\sigma \beq_B \pi$) if for all $k\in [n]$, 
 \[
S(\sigma, k) \beq_G S(\pi,k).
 \]
 Thus the crux of the proof of Lemma \ref{lem:strongLB} is to verify this fact for almost all pairs $(\sigma, \pi) \in \Sigma_1 \times \Sigma_2.$ 
 
 Note that the structure of a uniformly random permutation in each $\Sigma_i^{(t)}$ consists of three uniformly random permutations (of different sets.) 
 
\begin{proof}[Proof of Lemma \ref{lem:strongLB}]

Take uniformly randomly chosen elements $\pi_1$ and $\pi_2$ of $\Sigma_1^{(t)}$ and $\Sigma_2^{(t)}$ respectively.  We wish to show that, with high probability, $\pi_1 \beq_B \pi_2.$  By definition of $\Sigma_1^{(t)}$, one has $\pi_1 = \sigma_1\sigma_2\sigma_3$ where $\sigma_1, \sigma_2,$ and $\sigma_3$ are uniformly chosen random permutations of $\{1, \dots, t\}$, $\{t+1, \dots, n-t\}$, and $\{n-t+1, \dots, n\}$, respectively.  Likewise, $\pi_2 = \sigma_1'\sigma_2'\sigma_3'$, where $\sigma'_1, \sigma'_2,$ and $\sigma'_3$ are uniformly chosen random permutations of $\{n-t+1, \dots, n\}$, $\{t+1, \dots, n-t\}$, and $\{1, \dots, t\}$, respectively.

We want to verify that, with high probability, each $S(\pi_1,k) \beq_G S(\pi_2,k).$  We do that by considering different ranges of $k$.  

\noindent{\bf Case 1:} $k \leq t$.

In this case, $S(\pi_1,k) \beq_G S(\pi_2,k)$ holds deterministically, as $S(\pi_1,k) \subseteq \{1, \dots, t\}$ while $S(\pi_2,k) \subseteq \{n-t+1, \dots, n\}$ and $t \leq n/2$.

\noindent{\bf Case 2:} $k \geq n-t+1$. 

Again, $S(\pi_1,k) \beq_G S(\pi_2,k)$ holds deterministically.  $S(\pi_1, k)$ contains all of $\{1, \dots, n-t\}$ and some $k-(n-t)$ elements of $\{n-t+1, \dots, n\}$.  Meanwhile $S(\pi_2, k)$ contains all of $\{t+1, \dots, n\}$ and some $k-(n-t)$ elements of $\{1, \dots, t\}.$  That $S(\pi_1,k) \beq_G S(\pi_2,k)$ in this case is easily verified.     

\noindent{\bf Case 3:} $t+1 \leq k \leq n-t$.

This is the most interesting regime, and where the probability comes into place.  For $1 \leq \ell \leq k \leq n$, we must verify that the $\ell$\textsuperscript{th} largest element of $S(\pi_1,k)$ is at most the $\ell$\textsuperscript{th} largest element of $S(\pi_2, k)$.

For a fixed $k$, this process can be modeled as follows: Consider a walk from $(0,0)$ to $(n,0)$ that takes a positive diagonal step from $(i,j)$ to $(i+1,j+1)$ if $n-j+1 \in S(\pi_2,k) \setminus S(\pi_1,k)$, a negative diagonal step from $(i,j)$ to $(i+1,j-1)$ if $n-j+1 \in S(\pi_1,k) \setminus S(\pi_2,k),$ and a horizontal step from $(i,j)$ to $(i+1,j)$ otherwise.  Let $x_m$ denote the height of this walk after $m$ steps.  Then, $x_{m} = |S(\pi_2,k) \cap \{n,n-1, \dots, n-m+1\}| - |S(\pi_1,k) \cap \{n, n-1, \dots, n-m+1\}|$.  Per the definition of the Gale orders, $S(\pi_1,k) \beq_G S(\pi_2,k)$ if and only if this walk never crosses below the $x$-axis.  

For a fixed $t+1 \leq k \leq n-t$ and $k' \leq n$, let $\mathcal{B}_{k,k'}$ denote the event that $x_{k'} < 0$ --- that is, the event that the walk is below the $x$-axis after $k'$ steps. 

Observe that, though $\mathcal{B}_{k,k'}$ makes sense for all $k' \leq n$, it can occur only if $k' \leq n-t$; for, if $k'>n-t$, then $n-k'+1\in S(\pi_1,k)$, so all these steps are horizontal or downwards. Accordingly, the walk cannot be below the $x$-axis at these points.  

Thus, we note that $S(\pi_1,k) \not\beq_G S(\pi_2,k)$ if and only if some $\mathcal{B}_{k,k'}$ occurs for some $1 \leq k' \leq n-t$, and  $\pi_1 \not\beq_B \pi_2$ if and only if $S(\pi_1,k) \not\beq_G S(\pi_2,k)$ for some $k\in[n]$.

Combined, \[
\Prob(\pi_1 \not\beq_B \pi_2)
= \Prob\left( \bigcup_{\substack{t+1 \leq k \leq n-t\\ t+1 \leq k' \leq n}} \mathcal{B}_{k,k'} \right)
\leq \sum_{\substack{t+1 \leq k \leq n-t\\ t+1 \leq k' \leq n}} \Prob(\mathcal{B}_{k,k'}).
\]

We now prove that $\Prob(\mathcal{B}_{k,k'}) = o(n^{-2})$. Fix $t +1 \leq k \leq n-t$, and fix $k' \leq n$.    If $k' \leq t$, the event $\mathcal{B}_{k,k'}$ does not hold by construction, so we may further assume that $k' > t.$

Now, consider the walk defined above.  Note that $\{n, n-1, \dots, n-t+1\} \in S(\pi_2,k) \setminus S(\pi_1, k)$, so the first $t$ steps of the walk are positive diagonal steps.  Now note that each of $S(\pi_i,k) \cap \{t+1, \dots, n-t\}$ is an independent and uniformly random subset of $\{t+1, \dots, n-t\}$ of size $k-t$.  Thus the walk for the next $k'-t$ steps behaves similarly to an unbiased random walk where some of the steps are flat. It is for this reason that the $\Prob(\mathcal{B}_{k,k'})$ are small, as the initial part of the walk forces us to start well above the $x$-axis.  Unfortunately, the events involved are not quite independent because $S(\pi_i,k)$ have size exactly $k$. This means the walk is not quite unbiased, but it will still turn out to be manageable. It is this intuition we now make precise.  

We move from walks derived from uniform random subsets of size $k-t$ to walks derived from sets containing each element with probability $\frac{k-t}{n-2t}$ independently. These have $k-t$ elements in expectation and are easier to study. We will then condition on these random sets having exactly $k-t$ elements, which yields the same distribution of walks that we wish to study.   

To this end, let us now suppose $X^{(k)}_{t+1}, X^{(k)}_{t+2}, \dots, X^{(k)}_{n-t}$, and also $Y^{(k)}_{t+1}, Y^{(k)}_{t+2}, \dots, Y^{(k)}_{n-t}$ are independent Bernoulli random variables, which take the value $1$ with probability $p = \frac{k-t}{n-2t}.$  As observed previously we only need to be concerned when $k' \leq k-t$ and restrict ourselves to this regime.  
Let $S_{k,k'} = \sum_{i=t+1}^{k'} X_{n+1-i}$ and $S'_{k,k'} = \sum_{i=t+1}^{k'} Y_{n+1-i}$.  Then $W_{k,k'} = t+(S_{k,k'} - S'_{k,k'})$ can be thought of as the final height of a random walk, where the walk takes a positive vertical step at $\{i:X_i^{(k)}-Y_i^{(k)}=1\}$, a negative vertical step at $\{i:Y_i^{(k)}-X_i^{(k)}=1\}$ and a horizontal step otherwise. In other words, we view $X_i^{(k)}$ and $Y_i^{(k)}$ as indicator vectors for our sets $S(\pi_i,k)\cap\{t+1,\ldots, n-t\}$. However, since we need our walks to be derived from sets $S(\pi_i,k)$ satisfying $|S(\pi_i,k)\cap\{t+1,\ldots, n-t\}|=k-t$, whereas there is no bound on the number of $i$ such that $X_i^{(k)}=1$ or $Y_i^{(k)}=1$, we study $W_{k,k'}$ conditioned on $\sum_{i=t+1}^{n-t}X_{n+1-i}^{(k)}=\sum_{i=t+1}^{n-t}Y_{n+1-i}^{(k)}=k-t$.

Let $\mathcal{B}'_{k,k'}$ be the event that $W_{k,k'} < 0$, and let $\mathcal{I}$ denote the event that $\sum_{i=t+1}^{n-t}X_{n+1-i}^{(k)}=\sum_{i=t+1}^{n-t}Y_{n+1-i}^{(k)} = k-t$.  Note that conditioned on $\mathcal{I}$, the sets indexed by the $X_i^{(k)}$ and $Y_i^{(k)}$ have size $k-t$ and are equally likely to be any such set.  Moreover, $\Prob(\mathcal{I}) > \frac{1}{n^2}$.  This follows as $k-t$ is the modal value of $\sum X_i^{(k)}$ and $\sum Y_i^{(k)}$ and these binomial random variables take on only $n-2t < n$ values. We remark that one can get slightly sharper bounds by computing, using Stirling's approximation, $\Prob(\operatorname{Bin}(n-2t,p)) = k-t,$ but this crude bound suffices for our purposes.

Thus\[
\Prob(\mathcal{B}_{k,k'}) = \Prob(\mathcal{B}'_{k,k'}|\mathcal{I}) = \frac{\Prob(\mathcal{B}'_{k,k'} \cap \mathcal{I})}{\Prob(\mathcal{I})} \leq n^2 \Prob(\mathcal{B'}_{k,k'}).  
\]
It suffices then to estimate $\Prob(\mathcal{B'}_{k,k'}).$  Note that $W_{k,k'}$ is a function of the $X_j^{(k)}$ and $Y_j^{(k)}$, and the value changes by at most one if one of the $X_{j}^{(k)}$ or $Y_{j}^{(k)}$ are changed; thus we may apply Proposition~\ref{prop:mcdiarmid} where each $c_j = 1$, and there are $2(k'-t) < 2n$ variables involved.  Moreover, $\e[W_{k,k'}] = t$ as the walk is unbiased after the first $t$ biased steps.  Then
\begin{align*} 
\Prob(\mathcal{B'}_{k,k'}) = \Prob(W_{k,k'} < 0) &= \Prob\big(W_{k,k'} < \e[W_{k,k'}] - t\big) \\&\leq \exp\left( \frac{-2t^2}{2n}  \right) \leq \exp(-9 \log n) = n^{-9}.  
\end{align*} 
Then $\Prob(\mathcal{B}_{k,k'}) \leq n^2 \Prob({B'}_{k,k'}) = o(n^{-2}).$ We have already observed this suffices to prove $\mathcal{P}(\pi_{1} \not\beq_{B} \pi_2) = o(1).$  That is, with probability $1-o(1)$, a pair of randomly selected permutations from $\Sigma_1^{(t)} \times \Sigma_2^{(t)}$ are comparable, and this is what the lemma asserts.  
\end{proof}

\printbibliography

\end{document}